 \providecommand{\U}[1]{\protect\rule{.1in}{.1in}}
 \newtheorem{theorem}{Theorem}[section]
 \theoremstyle{plain}
 \newtheorem{corollary}{Corollary}[section]
 \newtheorem{example}{Example}[section]
 \newtheorem{lemma}{Lemma}[section]
 \newtheorem{proposition}{Proposition}[section]
 \numberwithin{equation}{section}
\def\Z{{\mathbb Z}}
\begin{document}
 	\title[{\normalsize On 2-absorbing submodules and their Amalgamations}]{{\normalsize On 2-absorbing submodules and their Amalgamations}}
 	\author{Abuzer G\"UND\"UZ}
 	\address{Department of Mathematics, Sakarya University, 54050 Sakarya, Turkey.}
 	\email{abuzergunduz@sakarya.edu.tr}
 	\ 	\subjclass[2020]{13C99, 13C13, 16P60}
 	\keywords{ 2-absorbing submodule; amalgamated algebra; amalgamated modules along an ideal; prime submodule.}
 	
 	\begin{abstract}

 Let $R_1$ and $R_2$ be commutative rings with $1\neq 0,\;M$ and $N$ be unitary $R_1-$module and $R_2-$module, respectively. $f:R_1\rightarrow R_2$ be a ring homomorphism and
 $\varphi: M\rightarrow N$ be an $R-$module homomorphism. This article studied $2-$absorbing submodule that is a generalization of the concept of prime submodule. Firstly, some characterizations of $2-$absorbing submodule are presented. Then, we examine the notion of $2-$absorbing submodule in amalgamation module $M \bowtie^{\varphi} JN$. We detected when $F\bowtie^\varphi JN$ is a $2-$absorbing submodule of $M \bowtie^{\varphi} JN$ by using the isomorphism $\frac{M \bowtie^{\varphi} JN}{F \bowtie^{\varphi} JN} \cong \frac{M}{F}$ and the homomorphism $p_{\gamma}: M \bowtie^{\varphi} JN \rightarrow \varphi(M)+JN$, where $J$ be an ideal of $R_2$ and $F$ be a submodule of $M.$


 
		
 	\end{abstract}
 	\maketitle

\section{Introduction
\label{sec:introduction}}

Throughout this paper, all rings are assumed to be commutative with identity and all modules are assumed to be unital. 
Let $R$ be a ring and $M$ be an $R-$ module and $F$ a submodule of $M$. The set $S\subseteq R$ is called a multiplicatively closed subset (m.c.s.) of $R$, if $1\in S$ and $ s_1s_2\in S$ for $s_1,s_2\in S$. Let $F$ be a submodule of $M,\;K$ be a nonempty subset of $M$ and $I$ be an ideal of $R$. In this case, the set $(F:_R K)=\{r\in R:\; rK\subseteq F\}$ and $(F:_M I)=\{m\in M:\; Im\subseteq F\}$ is called the residual of $F$ by $K$ and $I$, respectively. The annihilator of $M$ that is denoted by $ann_R(M)=(0:_R M)$.  For any $m\in M$, the set $<m>=Rm=\{rm:\; \forall r\in R\}$ is cyclic submodule of $M$ and if $M=<m>$, then $M$ is called cyclic module. For any finite subset $T$ of $M$, if $M=<T>$, then $M$ is a finitely generated $R-$module. The $R-$module $M$ is called faithful if and only if $ann_R (M)$ is zero. For an element $m\in M$, the set $Ann_R(m)=\{r\in R:\; rM\subseteq N\}$ is called the annihilator of $m$ and it is an ideal of $R$. The radical of $F$ is the intersection of all prime submodules of $M$ containing $F$, denoted by $rad(F)$, for a proper submodule $F$ of $M$. If every submodule $F$ of $M$ has the form $IM$ for some ideal $I$ of $R$, then $M$ is named a multiplication module\cite{ES}.\\
A proper submodule $F$ of $M$ is called a $2-$ absorbing submodule of $M$ if whenever $a,b\in R,\;m\in M$ and $abm\in F$, then $ab\in (F:_R M)$ or $am\in F$ or $bm \in F.$ According to this definition, every prime submodule of $M$ is a $2-$ absorbing submodule of $M$, but the converse is not always true (see [\cite{DS}, Ex. 2.2] ). The concept of $2-$ absorbing submodules of $M$ was introduced by A.Y.Darani and F.Soheilnia in \cite{DS}. They got many basic properties of these classes of submodules. This is a kind of a generalizations of prime and weak prime submodules. They showed the intersection of each distinct pair of prime submodules of $M$ is a $2-$ absorbing and if $M$ is a cyclic $R-$ module and $F$ is a $2-$absorbing submodule of $M$, then either $M-rad F=P$ is a prime submodule of $M$ such that $P^2\subseteq F$ or $M-rad F=P_1\cap P_2$ and $(M-rad F)^2 \subseteq F$, where $P_1,P_2$ are the only distinct minimal prime submodules on $F.$ Finally, they proved that $F$ is $2-$ absorbing submodule of $M$ if and only if $(F:_R M)$ is a $2-$ absorbing ideal of $R$ where $M$ is a cyclic $R-$module and $F$ is a submodule of $M$.\\
Then, S. Payrovi and S. Babaei studied on $2-$absorbing submodules in \cite{PB}. They showed a new definition of $2-$absorbing submodules when $(F:_{M}I)$ is $2-$ absorbing submodules of $M,$ where $F$ is $2-$absorbing  submodules of $M$ and $I$ is an ideal of $R$. They gave $(F:_R M)$  is a prime ideal of $R$ if and only if $(F:_{R}M)$ is a prime ideal of $R$, for all $m\in M/F$ and obtained some results. Also, they showed if $F$ is a $2-$ absorbing submodule of $M$ and $M/F$ is Noetherian, then a chain of $2-$ absorbing submodules of $M$ is constructed.\\
Let $R$ be a ring and $M$ be a $R-$module. $R\propto M$ is called the idealization of $R$ (also called the trivial extension of $R$ by $M$) that is an abelian group with with multiplication defined by $(r_1,m_1)\cdot (r_2,m_2)=(r_1r_2,r_1m_2+r_2m_1)$, where $r_1,r_2\in R$ and $m_1,m_2\in M$. It was introduced by Nagata [\cite{N}, p. 2] (see more detail \cite{AW}, \cite{DKM} and \cite{KM}).
Let $R$ be a ring, $J$ is an ideal of $R$ and $M$ be an $R-$module. The set $$R\bowtie J=\{(r,r+j):\;r\in R, j\in J \}$$
is called the amalgamated duplication of $R$ along $J$, which is a subring of $R\times R$ (see \cite{DF2},\;\cite{DF}).\\
Let $R_1$ and $R_2$ be two ring and $f:R_1 \rightarrow R_2$ be a ring homomorphism, $J$ be an ideal of $R_2,\; M$ be an $R_1-$module, $N$ be an $R_2$ module (that is an $R_1-$module induced naturally by $f$) and $\varphi:M\rightarrow N$ be an $R_1-$module homomorphism.\\
Then the set $$R_1\bowtie^f J=\{(r_1,f(r_1)+j): r_1\in R_1,\; j\in J \} $$ is called the amalgamation of $R_1$ and $R_2$ along $J$ with respect to $f,$ which is a subring of $R_1\times R_2$ (see \cite{DFF} and \cite{DFF2}). This construction is a generalization of the amalgamated duplication of a ring along an ideal (introduced and studied by \cite{DF2}). This construction is obtained by the frame of pullbacks \cite{DFF}. Besides, other construction such as $R+XS(X), R+XS[[X]]$, the $D+M$ can be studied as particular cases of the amalgamation (see [\cite{DFF}, Examples 2.5. and 2.6]). Besides, in \cite{DFF2}, they studied over $A\bowtie^f J$ with particular attention to the prime spectrum, to the chain properties and the Krull dimension.\\
The set $$M\bowtie J=\{ (m,m^{'})\in M\times M: m-m^{'}\in JM \}$$ is called the duplicatoin of the $R-$module $M$ along the ideal $J$, which is an $(R\bowtie J)-$module with scalar multiplication defined by $$(r,r+j)\cdot (m,m^{'})=(rm,(r+j)m^{'}) $$ for every $r\in R,\; j\in J$ and $(m,m^{'})\in M\bowtie J.$ (see \cite{BMT})\\
In \cite{BMT}, the authors got many properties and results concerning this kind of modules.\\
Let $N$ be a submodule of an $R-$module $M$ and $J$ be an ideal of $R.$ Then, $$N\bowtie J=\{(n,m)\in N\times M: n-m\in JM \}$$ and 
$$\Bar{N}=\{(m,n)\in M\times N: m-n\in JM\} $$ are submodule of $M\bowtie J$.
Also, the set $$M\bowtie^{\varphi} JN=\{(m,\varphi(m)+n): m\in M\;and\; n\in JN \}$$ is called the amalgamation of $M$ and $N$ along $J$ with respect to $\varphi,$ which is an $(R_1\bowtie^f J)-$module with the following scalar product 
$$(r_1,f(r_1)+j)\cdot (m,\varphi(m)+n)=(r_1m, \varphi(r_1m))+f(r_1)n+j\varphi(m)+jn$$
Also, $F$ is a submodule of $M$ and $N_2$ is a $f(R)+J-$submodule of $\varphi(M)+JN$, then $$F\bowtie^\varphi JN=\{(m,\varphi(m)+n) \in M\bowtie^\varphi JN: m\in F\; and \; n\in JN \}$$ and $$\Bar{N_2}^{\varphi}=\{(m,\varphi(m)+n) \in M\bowtie^\varphi JN: \varphi(m)+n \in N_2 \}$$ are submodules of $M\bowtie^\varphi JN$ \cite{KC1}.\\
In \cite{KMSS},the authors studied some basic properties of the amalgamation of modules such that when $M\bowtie^\varphi JN$ is a Noetherian or a coherent $R\bowtie^f J-$module.\\
If $M=R_1, N=R_2$ and $\varphi= f$, then the amalgamation module, along $J$ with respect to $\varphi$ coincides with the amalgamation of rings along $J$ with respect to $f$. If we take $R=R_1=R_2,\; M'=M=N,\;f=Id_{R}$ and $\varphi=Id_{M}$, then the amalgamation of $M$ and $N$ along $J$ with respect to $\varphi$ is exactly the duplication of the $R-$module $M$ along the ideal $J$ and so we can write $F\bowtie^\varphi JN=F\bowtie J$ (see more details \cite{KMSS} and \cite{KC1}).\\
If $S_1$ is a multiplicatively closed subset of $R_1$ and $S_2$ is a multiplicatively closed subset of $R_2$, the sets $$S\bowtie^f J=\{(s_1,f(s_1)+j): s_1\in S_1, j\in J \} $$ and  $$\Bar{S_2}^f=\{(r,f(r)+j): r\in R_1, f(r)+j \in S_2 \} $$ are multiplicatively closed subset of $M_1\bowtie^f JM_2$ \cite{KC1}.\\
In \cite{KC1}, the authors gave some results when the submodule of $M\bowtie^\varphi JN$ is a $S-$version (both $S\bowtie^f J$ and $\Bar{S_2}^f $) of prime and weakly prime submodule of $M\bowtie^\varphi JN$, under some conditions using by definitions. Unlike this, we use another technique that is a special ismorphism from [ \cite{KMSS},  Proposition 2.2.(2)] to detect $2-$absorbing submodules of $M\bowtie^\varphi JN$.\\
In \cite{UTK}, they studied the concept of $S-2-$ absorbing submodule of $M$ which is a generalization of S-prime submodules and 2-absorbing submodules, where $S$ is a multiplicatively closed subset (m.c.s.) of $R$. They got many new results for $S-2-$ absorbing submodule of $M$. Clearly, many results of $2-$absorbing submodule can be obtained by taking $S=\{1_R\}$. We used this in Proposition \ref{Prop.4}(1) and Proposition \ref{Prop.Triv.2} and Corollary \ref{cor2}.\\
The amalgamation of some special ideals is studied in several article [\cite{MMZ}, \cite{MMM} and \cite{G}]. Especially, in \cite{MMM} and \cite{G}, they gave the main theorem by using isomorphisms mentioned in [\cite{DFF}, Proposition 5.1 (2)] for $n-$absorbing, strongly $n-$absorbing ideals, $1-$absorbing prime and $1-$absorbing primary ideals of amalgamation ring. We aim to detect the $2-$absorbing submodule of $M \bowtie^{\varphi} JN$ by using the isomorphism presented in [\cite{KMSS}, Proposition 2.2.(2)] and the module homomorphism \ref{homorphism} by [\cite{KMSS}, Proposition 2.2 (3)].\\
In this note, we studied the $2-$absorbing submodule and their amalgamations. This study is composed of three section. In Section \ref{section3}, we got some new results about $2-$absorbing submodule such as localizations (see, Proposition \ref{Prop.4}), the idealization of  $2-$absorbing ideal of $R(+)M$ for the ideal $I(+)F$  of $R(+)M$ (see, Proposition \ref{Prop.Triv.}), the intersection of submodules (see, Proposition \ref{Prop.3} (1)) and the union of submodules (see, Proposition \ref{Prop.3} (2)) when the family of submodules are chain of $2-$absorbing submodules. In the last Section \ref{sectoin2}, we gave the amalgamation of the $2-$absorbing submodules of $M \bowtie^{\varphi} JN$ in Theorem \ref{thm13} which is proved by using the isomorphism $\frac{M \bowtie^{\varphi} JN}{F \bowtie^{\varphi} JN} \cong \frac{M}{F}$ by [ \cite{KMSS},  Proposition 2.2.(2)] and the homomorphism $p_{\gamma}: M \bowtie^{\varphi} JN \rightarrow \varphi(M)+JN$ \ref{homorphism} by [\cite{KMSS}, Proposition 2.2 (3)].
Compared to \cite{KC1} and \cite{Kol}, we proved Theorem \ref{thm13} for the amalgamation of $2-$absorbing submodule of $M \bowtie^{\varphi} JN$ by using by above isomorphism and homomorphism, similar to [\cite{MMM}, Theorem 2.1] and [\cite{G}, Theorem 1] in amalgamation ring. 
 Also,  we obtained new results for duplication module (see, Corollary \ref{duplication}), for $M$ be a cyclic module (see, Corollary \ref{Cor.0}), the localization of $M\bowtie^\varphi JN$ (see, Corollary \ref{Cor.1}), when $M$ be a finitely generated multiplication $R-$module (see, Corollary \ref{cor2}) and when $R$ be an integral domain, $M$ a finitely generated faithfull multiplication $R-$ module and also if $M$ is a valuation module (see, Corollary \ref{Cor.4}) by  using our main Theorem \ref{thm13}.

\section{Results in $2-$absorbing submodule} \label{section3}

In this section we give some new results for $2-$absorbing submodule to use in Corollary \ref{Cor.1}.

\begin{proposition}
If $F$ is a $2-$absorbing submodule of M and $K_1$ is a submodule of $M$ such that $K_1 \nsubseteq F$. Then, we have the following statements\\
1) $(F:_{R} K_1)$ is a $2-$absorbing submodule of $M.$ \label{prop1.1}\\
2) $(F:_{R} M)$ is a $2-$absorbing submodule of $M.$
\label{Prop.1}
\end{proposition}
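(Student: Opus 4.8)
The plan is to prove the intended statement, namely that $(F:_{R}K_1)$ is a $2$-absorbing \emph{ideal} of $R$: being a residual of a submodule by a subset, $(F:_{R}K_1)=\{r\in R:\,rK_1\subseteq F\}$ is an ideal of $R$, so the statement should read ``ideal of $R$'' rather than ``submodule of $M$''. Recall that an ideal $I$ of $R$ is $2$-absorbing if, whenever $abc\in I$ with $a,b,c\in R$, then $ab\in I$ or $ac\in I$ or $bc\in I$. First I would check properness: $(F:_{R}K_1)=R$ would force $K_1=1\cdot K_1\subseteq F$, contradicting $K_1\nsubseteq F$; hence $(F:_{R}K_1)$ is proper. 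For part (2), since $F$ is a proper submodule we have $M\nsubseteq F$, so (2) is exactly the special case $K_1=M$ of (1), and it suffices to treat (1).

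The heart of the argument is a submodule-level strengthening of the defining property, which I would isolate as a lemma: if $a,b\in R$ and $N$ is a submodule of $M$ with $abN\subseteq F$, then $ab\in(F:_{R}M)$ or $aN\subseteq F$ or $bN\subseteq F$. To prove it, assume all three fail; then $ab\notin(F:_{R}M)$ and there exist $n_1,n_2\in N$ with $an_1\notin F$ and $bn_2\notin F$. For each $n\in N$ we have $abn\in F$, so the $2$-absorbing hypothesis on $F$ gives $an\in F$ or $bn\in F$ (the option $ab\in(F:_{R}M)$ being excluded). Applying this to $n_1$ and $n_2$ yields $bn_1\in F$ and $an_2\in F$. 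Now apply the hypothesis to $n_1+n_2$: we obtain $a(n_1+n_2)\in F$ or $b(n_1+n_2)\in F$; the first forces $an_1\in F$ (using $an_2\in F$) and the second forces $bn_2\in F$ (using $bn_1\in F$), a contradiction either way.

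With the lemma in hand, part (1) is immediate. Suppose $a,b,c\in R$ with $abc\in(F:_{R}K_1)$, i.e.\ $abcK_1\subseteq F$. Set $N:=cK_1$, a submodule of $M$, so that $abN=abcK_1\subseteq F$. The lemma then gives $ab\in(F:_{R}M)$, or $a(cK_1)\subseteq F$, or $b(cK_1)\subseteq F$. In the first case $abK_1\subseteq abM\subseteq F$, so $ab\in(F:_{R}K_1)$; in the second $acK_1\subseteq F$, so $ac\in(F:_{R}K_1)$; in the third $bcK_1\subseteq F$, so $bc\in(F:_{R}K_1)$. Thus one of $ab,ac,bc$ lies in $(F:_{R}K_1)$, proving it is $2$-absorbing, and part (2) follows by taking $K_1=M$.

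The only genuinely nontrivial step is the lemma, whose proof rests on the standard $n_1+n_2$ trick for promoting the pointwise $2$-absorbing condition to a submodule statement; once that is secured, the rest reduces to rewriting residual inclusions, which I expect to be routine.
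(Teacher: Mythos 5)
Your proof is correct, but it takes a genuinely different route from the paper's --- and, as it happens, a necessary one. The paper's own proof of part (1) is a direct elementwise check: it takes $m\in M$ with $abm\in (F:_R K_1)$, computes $abmk_1=ab(mk_1)=ab\bar m\in F$ for all $k_1\in K_1$, and then applies the $2$-absorbing hypothesis to each $\bar m$. This inherits exactly the type error you spotted: $(F:_R K_1)$ is an ideal of $R$, so ``$abm\in(F:_R K_1)$'' with $m\in M$ does not parse, $mk_1$ is a product of two module elements, and the conclusion reached is the mixed disjunction ``$ab\in((F:_R K_1):_R M)$ or $am\in(F:_R K_1)$ or $bm\in(F:_R K_1)$'' rather than the $2$-absorbing condition for an ideal. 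Even read charitably, the paper's argument has a substantive gap independent of the typing: the $2$-absorbing hypothesis is invoked separately for each $k_1\in K_1$, so which disjunct holds may vary with $k_1$, and nothing in the paper makes the choice uniform. Your lemma --- $abN\subseteq F$ forces $ab\in(F:_R M)$ or $aN\subseteq F$ or $bN\subseteq F$, proved by the $n_1+n_2$ trick --- is precisely the device that promotes the pointwise condition to a uniform one over the submodule $cK_1$, and some such argument is unavoidable. Your reinterpretation of the statement ($(F:_R K_1)$ is a $2$-absorbing \emph{ideal} of $R$, proper because $1\in(F:_R K_1)$ would give $K_1\subseteq F$, with part (2) the case $K_1=M$ since $F$ is proper) is the only reading under which the proposition is a correct assertion, matching the known result of Darani--Soheilnia for $K_1=M$; note, though, that the paper later reuses the flawed typing (e.g.\ Corollary \ref{Cor.1} forms $(F:_R K_1)\bowtie^\varphi JN$, which again requires a submodule of $M$), so your fix repairs the proposition but not everything downstream of it. One small caveat: the paper quotes Badawi's definition of a $2$-absorbing ideal as a \emph{nonzero} proper ideal; under that convention your statement would need the extra hypothesis $(F:_R K_1)\neq 0$, though the now-common convention (and the paper's own practice elsewhere) drops nonzeroness.
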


\begin{proof}
1) Let $a,b\in R, m\in M$ and $abm \in (F:_{R} K_1)$ and so $abmk_1=ab(mk_1)=ab\Bar{m} \in F,$ and hence $ab\in (F:_R K_1)$ for all $k_1\in K_1$ and $\Bar{m}=mk_1\in K_1$. It implies that $ab\in ((F:_{R} K_1):_R M)$ since $(F:_{R} K_1)\subseteq ((F:_{R} K_1):_R M)$.
From $F$ is a $2-$absorbing submodule of $M$, then $ab\in ((F:_{R} K_1):_R M)$ or $a\Bar{m} \in F$ or $b\Bar{m} \in F$. 
It implies that $ab\in ((F:_{R} K_1):_R M)$ or $am\in (F:_{R} K_1) $ or $bm\in (F:_{R} K_1)$, as desired.\\
2) Follows by Proposition \ref{prop1.1} (1), if we take $K_1=M$.
\end{proof}

\begin{proposition}
Let $F$ be $2-$absorbing submodule of $M$ and for $r\in R \backslash(F:_R M)$. Then $(F:_{M} r)$ is a $2-$absorbing submodule of $M$ containing $F$.
\label{Prop.2}
\end{proposition}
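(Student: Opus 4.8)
The plan is to verify the three requirements in the definition of a $2$-absorbing submodule for $(F:_M r)$: that it contains $F$, that it is proper, and that the absorbing condition holds. The containment $F\subseteq (F:_M r)$ is immediate, since any $m\in F$ satisfies $rm\in F$ because $F$ is a submodule closed under the scalar action, whence $m\in (F:_M r)$. For properness I would invoke the hypothesis $r\notin (F:_R M)$ directly: if we had $(F:_M r)=M$, then $rM\subseteq F$, which is precisely $r\in (F:_R M)$, contradicting the assumption; hence $(F:_M r)\neq M$.

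The core of the argument is the absorbing condition, and the key idea is to absorb the scalar $r$ into the module element. Suppose $a,b\in R$, $m\in M$ and $abm\in (F:_M r)$. By definition this means $r(abm)\in F$, which by commutativity of the scalar action I rewrite as $ab(rm)\in F$. I then apply the hypothesis that $F$ is $2$-absorbing to the scalars $a,b$ and the element $rm\in M$. This produces three cases: $ab\in (F:_R M)$, or $a(rm)\in F$, or $b(rm)\in F$.

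In the second and third cases the conclusion is immediate: $a(rm)=r(am)\in F$ gives $am\in (F:_M r)$, and likewise $b(rm)\in F$ gives $bm\in (F:_M r)$. In the first case I must promote $ab\in (F:_R M)$ to $ab\in ((F:_M r):_R M)$; this follows from the containment already established, since $F\subseteq (F:_M r)$ forces $(F:_R M)\subseteq ((F:_M r):_R M)$. Thus in every case one of the three defining alternatives for $(F:_M r)$ holds, so $(F:_M r)$ is $2$-absorbing and contains $F$, as claimed.

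I expect no serious obstacle here. The only point requiring a moment's care is the first case, where the annihilator $(F:_R M)$ must be replaced by the larger residual $((F:_M r):_R M)$, but this is handled cleanly by the monotonicity of the residual under the inclusion $F\subseteq (F:_M r)$. The entire proof rests on the single observation that $abm\in (F:_M r)$ is equivalent to $ab(rm)\in F$, which reduces the statement to a direct application of the $2$-absorbing property of $F$ evaluated at the element $rm$.
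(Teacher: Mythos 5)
Your proof is correct and follows essentially the same route as the paper: both arguments reduce $abm\in(F:_M r)$ to $ab(rm)\in F$, apply the $2$-absorbing property of $F$ to the element $rm$, and promote the first case via the inclusion $(F:_R M)\subseteq ((F:_M r):_R M)$. Your explicit verification of properness and of the containment $F\subseteq (F:_M r)$ is a welcome addition the paper leaves implicit, but it changes nothing of substance.
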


\begin{proof}
Let $r\in R\backslash  (F:_{R} M),\; a,b \in R,\;m\in M$ such that $abm \in (F:_{M} r)$. Then $abmr=ab(mr)=ab\Bar{m} \in F$, for $\Bar{m}=mr \in M.$
Since $F$ is $2-$absorbing submodule of $M$, then $ab \in (F:_{R} M)\subseteq ((F:_{M} r):_R M)$ and so $ab \in ((F:_{M} r): M)$ or $amr=a\Bar{m} \in F$, then $am \in (F:_{M} r)$  or $bmr=b\Bar{m} \in F$, then $bm \in (F:_{M} r)$, as desired.
\end{proof}

Let $S$ be a m.c.s. of $R_1$. By the [\cite{UTK}, Example 5], the intersection to two $2-$absorbing submodules is not necessary $2-$absorbing submodule, if we take $S=\{1_{R_1}\}$. But when the family of $\{F_i\}_{i\in \Delta}$ be a chain of $2-$absorbing submodules of $M$, we obtain the following.

\begin{proposition}
Let $\{F_i\}_{i\in \Delta}$ be a chain of $2-$absorbing submodules of $M$. Then the followings hold:\\
1. $\bigcap_{i\in \Delta}F_i$ is a $2-$absorbing submodules of $M.$\\
2. If $M$ is a finitely generated $R-$module, then $\bigcup_{i\in \Delta} F_i$ is a $2-$absorbing submodules of $M.$
\label{Prop.3}
\end{proposition}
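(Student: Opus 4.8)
The plan is to verify both statements directly from the definition of a $2$-absorbing submodule, exploiting the fact that the chain $\{F_i\}_{i\in\Delta}$ is totally ordered by inclusion.

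For (1), write $F=\bigcap_{i\in\Delta}F_i$; this is a submodule as an intersection of submodules, and it is proper since $F\subseteq F_i\subsetneq M$ for any fixed $i$. To test the $2$-absorbing property, take $a,b\in R$ and $m\in M$ with $abm\in F$, so that $abm\in F_i$ for every $i$. The essential observation is that each of the three alternatives of the definition is \emph{monotone} along the chain: if $F_i\subseteq F_j$, then $am\in F_i\Rightarrow am\in F_j$, $bm\in F_i\Rightarrow bm\in F_j$, and $ab\in(F_i:_RM)\Rightarrow ab\in(F_j:_RM)$, the last because $(F_i:_RM)\subseteq(F_j:_RM)$. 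Arguing by contradiction, I would assume that none of $am\in F$, $bm\in F$, $ab\in(F:_RM)$ holds; unpacking the intersection, this produces indices $i_1,i_2,i_3$ with $am\notin F_{i_1}$, $bm\notin F_{i_2}$, and $ab\notin(F_{i_3}:_RM)$. Since the chain is totally ordered, the three submodules $F_{i_1},F_{i_2},F_{i_3}$ have a least element, say $F_{i_0}$; by the downward monotonicity of the three negations, all three failures occur simultaneously at $F_{i_0}$, contradicting that $F_{i_0}$ is $2$-absorbing. Hence one of the three global alternatives must hold, and $F$ is $2$-absorbing.

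For (2), write $G=\bigcup_{i\in\Delta}F_i$, which is a submodule precisely because the family is a chain. Here the $2$-absorbing verification is the easy part: if $abm\in G$, then $abm\in F_{i_0}$ for a single index $i_0$, and since $F_{i_0}$ is $2$-absorbing, together with the inclusions $F_{i_0}\subseteq G$ and $(F_{i_0}:_RM)\subseteq(G:_RM)$, one of $ab\in(G:_RM)$, $am\in G$, $bm\in G$ follows immediately. The real content — and the place where the finite-generation hypothesis is indispensable — is showing that $G$ is proper. Writing $M=\langle x_1,\dots,x_n\rangle$, if $G=M$ then each generator $x_k$ lies in some $F_{i_k}$; taking the largest of the finitely many $F_{i_1},\dots,F_{i_n}$ (which exists because the chain is totally ordered) yields a single $F_{i^{\ast}}$ containing all generators, whence $F_{i^{\ast}}=M$, contradicting properness. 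Thus $G\subsetneq M$, and $G$ is $2$-absorbing.

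The main obstacle is the globalization step in part (1): unlike the union, where a single $F_{i_0}$ controls the membership of $abm$, the intersection forces the three alternatives of the definition to be reconciled across possibly different indices. Resolving this is exactly what the monotonicity-plus-least-element argument accomplishes. In part (2) the only genuinely load-bearing hypothesis is finite generation, used solely to guarantee that the union remains proper.
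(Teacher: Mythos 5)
Your proof is correct and follows essentially the same route as the paper's: in (1), use the total order to transfer the failures of the three alternatives to a single member of the chain and contradict its $2$-absorbing property, and in (2), localize $abm\in\bigcup_{i\in\Delta}F_i$ at one witnessing index and use finite generation only to keep the union proper. If anything your write-up is tighter than the paper's at two points: your three-witness minimum argument in (1) automatically handles the indices lying below the minimum, which the paper's two-index case split leaves implicit when it concludes $ab\in(\bigcap_{i\in\Delta}F_i:M)$ from $ab\in(F_i:M)$ for one $i$; and in (2) you correctly work with a single index $i_0$ such that $abm\in F_{i_0}$ (the paper loosely writes ``for all $i\in\Delta$, $abm\in F_i$'') and you spell out the generator-based properness argument that the paper merely asserts.
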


\begin{proof}
1. Suppose that $a,b \in R,\;m\in M,\; abm\in \bigcap_{i\in \Delta}F_i $ and $am\notin \bigcap_{i\in \Delta}F_i$, $bm \notin \bigcap_{i\in \Delta}F_i $. Then we have $\exists i \in \Delta,\; am\notin F_i$ and $\exists j \in \Delta,\; bm\notin F_j$. If $F_i\subseteq F_j$, we have $bm\notin F_i$, but since $F_i$ is $2-$absorbing submodules of $M$, then $ab\in (F_i:M)$. If $F_j\subseteq F_i$, we have $am\notin F_j$ and since $F_j$ is $2-$absorbing submodules of $M$, then $ab\in (F_j:M)$.  Hence $ab \in (\bigcap_{i\in \Delta}F_i:M)$, as desired.\\
2. Since $M$ is finitely generated, then $\bigcup_{i\in \Delta}F_i$ is a proper submodule of $M.$ Assume that  $a,b\in R, m\in M,\;abm \in \bigcup_{i\in \Delta}F_i$ and $am \notin \bigcup_{i\in \Delta}F_i$ and $bm \notin \bigcup_{i\in \Delta}F_i$. Then for all $i\in \Delta$, $abm \in F_i,\; am\notin F_i$ and $bm\notin F_i$ and so $ab \in (F_i:M)\subseteq (\bigcup_{i\in \Delta}F_i:M)$, as desired.
\end{proof}

\begin{proposition}
Let $S$ be a multiplicatively closed subset of $R_1$ and $S^{-1}M$ be the module of fraction of an $R_1-$module $M.$ Then we have the followings:\\
1) If $F$ is a  $2-$absorbing submodule of $M$, such that $(F:_{R} M) \cap S= \varnothing$,
then $S^{-1}F$ is a  $2-$absorbing submodule of $S^{-1}M$.\\
2) If $S^{-1}F$ is a  $2-$absorbing submodule of $S^{-1}M$, such that $Z(M/F)\cap S=\varnothing$, 
then $F$ is a  $2-$absorbing submodule of $M$.
\label{Prop.4}

\end{proposition}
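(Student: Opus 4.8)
The two parts share a single engine, so the plan is to record one elementary localization fact up front and then feed it the $2$-absorbing hypothesis in each direction. For $m \in M$ and $s \in S$ one has $m/s \in S^{-1}F$ if and only if there is some $u \in S$ with $um \in F$: writing $m/s = f/t$ with $f \in F$, $t \in S$ gives $v(tm - sf)=0$ for some $v \in S$, whence $(vt)m = (vs)f \in F$ with $vt \in S$; conversely $um \in F$ yields $m/s = (um)/(us) \in S^{-1}F$. Every case below reduces to clearing denominators into such a single relation and applying the definition of $2$-absorbing in the appropriate module.

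For part (1) I would first check that $S^{-1}F$ is a \emph{proper} submodule of $S^{-1}M$; this is the one place the hypothesis $(F:_R M)\cap S=\varnothing$ is genuinely needed, since properness of $S^{-1}F$ amounts to $1_{S^{-1}R}\notin(S^{-1}F:_{S^{-1}R}S^{-1}M)$ and must be read off from $S^{-1}(F:_R M)\neq S^{-1}R$. For the main verification, take $a/s_1,\,b/s_2\in S^{-1}R$ and $m/s_3\in S^{-1}M$ with $(a/s_1)(b/s_2)(m/s_3)\in S^{-1}F$. By the characterization there is $u\in S$ with $u\,abm\in F$, i.e. $ab(um)\in F$ with $um\in M$. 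Applying that $F$ is $2$-absorbing to the triple $a,\,b,\,um$ gives $ab\in(F:_R M)$, or $a(um)\in F$, or $b(um)\in F$. In the first case $abM\subseteq F$ forces $(ab/s_1s_2)S^{-1}M\subseteq S^{-1}F$, so $ab/s_1s_2\in(S^{-1}F:_{S^{-1}R}S^{-1}M)$; in the second, $u(am)=a(um)\in F$ gives $(a/s_1)(m/s_3)\in S^{-1}F$ by the characterization; the third case is symmetric. Hence $S^{-1}F$ is $2$-absorbing.

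For part (2) the decisive preliminary is a saturation lemma drawn from $Z(M/F)\cap S=\varnothing$: no element of $S$ is a zero-divisor on $M/F$, so whenever $s\in S$ and $sm\in F$ one cancels $s$ on $M/F$ to conclude $m\in F$. Properness of $F$ is immediate, since $F=M$ would give $S^{-1}F=S^{-1}M$, contradicting that $S^{-1}F$ is $2$-absorbing. Now suppose $a,b\in R$, $m\in M$ with $abm\in F$, so $(a/1)(b/1)(m/1)=abm/1\in S^{-1}F$; as $S^{-1}F$ is $2$-absorbing this yields $ab/1\in(S^{-1}F:_{S^{-1}R}S^{-1}M)$, or $am/1\in S^{-1}F$, or $bm/1\in S^{-1}F$. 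In the latter two cases the characterization produces $s\in S$ with $s(am)\in F$ (resp. $s(bm)\in F$), and the saturation lemma upgrades this to $am\in F$ (resp. $bm\in F$). In the first case, testing the colon against $x/1$ for each $x\in M$ gives $abx/1\in S^{-1}F$, hence $s\,abx\in F$ for some $s\in S$, and the saturation lemma yields $abx\in F$ for every $x$, i.e. $ab\in(F:_R M)$. Thus $F$ is $2$-absorbing.

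I expect the main obstacle to sit in part (2): converting the colon conclusion $ab/1\in(S^{-1}F:_{S^{-1}R}S^{-1}M)$ back to $ab\in(F:_R M)$ requires testing against all of $M$ and then invoking the saturation lemma \emph{uniformly}, and it is exactly $Z(M/F)\cap S=\varnothing$ that legitimizes this descent. In part (1) the only delicate point is the properness of $S^{-1}F$, where $(F:_R M)\cap S=\varnothing$ must be used rather than merely assumed; the remaining fraction manipulations are routine once the characterization above is in hand.
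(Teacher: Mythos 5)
Your part (2) is, in substance, the paper's own proof. The paper first records the identity $(S^{-1}F:_{S^{-1}R}S^{-1}M)=S^{-1}(F:_{R}M)$, justified by $Z(M/F)\cap S=\varnothing$, and then runs exactly your three-case analysis on $abm/1\in S^{-1}F$, cancelling the elements of $S$ in the two ``module'' cases; your saturation lemma is that same cancellation, and your testing of the colon against each $x/1$ is just the unpacked form of the colon identity. For part (1) the paper gives no argument at all --- it only cites [UTK, Proposition 1(iv)] --- so your direct verification is more self-contained: the characterization $m/s\in S^{-1}F$ iff $um\in F$ for some $u\in S$, followed by applying the $2$-absorbing hypothesis to the triple $a,b,um$, is correct and handles all three cases properly.

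There is, however, a genuine gap in your properness step for part (1). You read properness of $S^{-1}F$ off from $S^{-1}(F:_RM)\neq S^{-1}R$, which tacitly uses the equality $(S^{-1}F:_{S^{-1}R}S^{-1}M)=S^{-1}(F:_RM)$. That equality holds when $M$ is finitely generated, but in general one only has the inclusion $S^{-1}(F:_RM)\subseteq(S^{-1}F:_{S^{-1}R}S^{-1}M)$, and this inclusion points the wrong way for you: knowing $1\notin S^{-1}(F:_RM)$ does not rule out $S^{-1}F=S^{-1}M$. The failure is real, not merely an unjustified step. Take $R=k[x_1,x_2,\dots]$, let $Q_i=(x_i,x_{i+1},\dots)$, let $M=\bigoplus_{i\geq 1}R/Q_i$, and let $S$ be the multiplicative set of monomials in the variables. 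Every nonzero $m\in M$ has $(0:_Rm)=Q_j$ with $j$ the largest index in the support of $m$, a prime ideal; hence $abm=0$ forces $am=0$ or $bm=0$, so $F=0$ is a $2$-absorbing submodule. Moreover $(F:_RM)=\bigcap_i Q_i=0$, so $(F:_RM)\cap S=\varnothing$. Yet $x_jm=0$ for that same $j$, so every element of $M$ is $S$-torsion and $S^{-1}F=S^{-1}M=0$ is not proper, hence not $2$-absorbing. So your properness inference (and indeed the statement in this generality, which the paper shields behind the citation to [UTK]) requires an additional hypothesis such as $M$ finitely generated --- under which your colon identity is valid and the rest of your part (1), like all of your part (2), needs no repair.
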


\begin{proof}
(1): Follow by [\cite{UTK}, Proposition 1(iv)], if we take $S=\{1_{R_1}\}$.\\
2) First note that $(S^{-1}F:_{S^{-1}R} S^{-1}M)=S^{-1}(F:_{R}M)$
because $Z(M/F)\cap S=\varnothing$. Let $a,b \in R,\; m\in M$  and 
$abm \in F$ and so $(\frac{abm}{1}) \in S^{-1}F$. Since $S^{-1}F$ is 
a  $2-$absorbing submodule of $S^{-1}M$, either $(\frac{a}{1})(\frac{m}{1}) \in S^{-1}F$ or $(\frac{b}{1})(\frac{m}{1}) \in S^{-1}F$ or $(\frac{ab}{1}) \in (S^{-1}F:_{S^{-1}R} S^{-1}M)$. If $\frac{ab}{1}\in (S^{-1}F:_{S^{-1}R} S^{-1}M) $ then $\frac{ab}{1}\in S^{-1}(F:_{R}M)$ that implies $ab \in (F:_{R} M)$ and we are done. Otherwise, there exists $s\in S$ such that $sam \in F$ or there exists $t\in S$ such that $tbm \in F.$ This implies $am \in F$ or $bm \in F$, since  $Z(M/F)\cap S=\varnothing$. Hence, $F$ is a $2-$absorbing submodule of $M$.
\end{proof}


Let $M$ be a $R-$module. Recall by \cite{AW}, the set  $$R\propto M=\{(r,m):\; r\in R,\; m\in M \} $$ is a ring which is called the idealization of $R$ (also called trivial extension of $R$ by $M$) and defined by the following operation:\\
$$(r_1,m_1)\cdot(r_2,m_2)=(r_1r_2,r_1m_2+r_2m_1).$$ Here $(1,0)$ is the multiplicative identity and $(0,0)$ is zero of rings. Let $I$ be an ideal of $R$ and $F$ be a submodule of $M$, we have $I\propto F$ is an ideal of $R\propto M$ if and only if $IM\subset F$ by \cite{AW}.\\

Let $R$ be a ring and $I$ be an $R-$ module. Recall that a nonzero proper ideal $I$ of $R$ is called $2-$ absorbing ideal of $R$ if $a,b,c\in R$ and $abc \in I$, then $ab\in I$ or $ac\in I$ or $bc\in I$, which was introduced by Badawi in \cite{B}. It was a generalization of prime ideal. 

\begin{proposition} \label{Prop.Triv.2}
    Let $R$ be a ring and $I$ be an ideal of $R$. The following statemets are equivalent:\\
(1) $I$ is an $2-$absorbing ideal of $R$.\\
(2) $I(+)M$ is an $2-$absorbing ideal of $R(+)M$.
\end{proposition}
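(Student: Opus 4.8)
The plan is to prove the equivalence by establishing both implications directly from the definitions, exploiting the structure of the idealization $R(+)M$ whose multiplication is $(r_1,m_1)(r_2,m_2)=(r_1r_2, r_1m_2+r_2m_1)$. The key structural observation I would record first is that for an element $(a,x)\in R(+)M$, the membership $(a,x)\in I(+)M$ is equivalent to the single condition $a\in I$, since the second coordinate of $I(+)M$ ranges over all of $M$. Thus $2$-absorbing membership in $R(+)M$ is governed entirely by the ring coordinate, which is exactly what makes the correspondence work.

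For the direction $(2)\Rightarrow(1)$, I would start from $a,b,c\in R$ with $abc\in I$ and lift these to elements $(a,0),(b,0),(c,0)$ of $R(+)M$. Their product is $(abc,0)$, whose first coordinate $abc$ lies in $I$, so $(a,0)(b,0)(c,0)\in I(+)M$. Applying the hypothesis that $I(+)M$ is $2$-absorbing gives one of $(a,0)(b,0)\in I(+)M$, $(a,0)(c,0)\in I(+)M$, or $(b,0)(c,0)\in I(+)M$; reading off the first coordinate of each product yields $ab\in I$, $ac\in I$, or $bc\in I$ respectively, which is the desired conclusion. I would also need to observe that $I(+)M$ proper forces $I$ proper, and that $I$ nonzero is inherited, so that $I$ is a genuine $2$-absorbing ideal.

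For the harder direction $(1)\Rightarrow(2)$, I would take arbitrary elements $(a,x),(b,y),(c,z)\in R(+)M$ whose product lies in $I(+)M$. Computing the triple product in the idealization, its first coordinate is $abc$ and its second coordinate is some expression of the form $bc\,x+ac\,y+ab\,z$ (the precise symmetric combination arising from two successive applications of the multiplication rule). By the structural observation above, membership in $I(+)M$ depends only on the first coordinate, so the hypothesis reduces to $abc\in I$. Since $I$ is $2$-absorbing we obtain $ab\in I$ or $ac\in I$ or $bc\in I$. In each case I must verify that the corresponding pairwise product in $R(+)M$ lands in $I(+)M$: for instance if $ab\in I$ then $(a,x)(b,y)=(ab,\,ay+bx)$ has first coordinate $ab\in I$, hence lies in $I(+)M$, and symmetrically for the other two cases.

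The main obstacle, though a mild one, is bookkeeping: one must correctly compute the second coordinate of the triple product in $R(+)M$ and then argue that it is irrelevant because $I(+)M$ absorbs every element of $M$ in the second slot. The conceptual point to emphasize is precisely this decoupling---the $2$-absorbing condition in the idealization collapses to the ring coordinate because $IM\subseteq M$ trivially and $I(+)M$ is defined with the full module $M$ in the second component. I would present the second-coordinate computation as a short aside and then state plainly that it plays no role in the membership test, which keeps the proof clean and avoids grinding through the bilinear algebra in full detail.
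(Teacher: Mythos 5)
Your proposal is correct, but it takes a genuinely different route from the paper: the paper proves this proposition in one line, by citing \cite{UTK}, Proposition 6 (the $S$-$2$-absorbing version of the same statement) and specializing to the trivial multiplicative set $S=\{1_R\}$, with no computation at all. You instead verify both implications directly in the idealization, and your argument is sound: the triple product $(a,x)(b,y)(c,z)=(abc,\,bcx+acy+abz)$ is computed correctly, and your key structural observation—that $(a,x)\in I(+)M$ if and only if $a\in I$, because the second slot of $I(+)M$ carries all of $M$ (and $IM\subseteq M$ automatically, so $I(+)M$ is indeed an ideal)—is exactly the decoupling that makes the equivalence work. Notably, your $(2)\Rightarrow(1)$ direction via the lifts $(a,0),(b,0),(c,0)$ is the same device the paper uses in its direct proof of the neighboring Proposition \ref{Prop.Triv.} (where the second component is a proper submodule $F$ rather than all of $M$, and only one direction survives). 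What your approach buys is self-containedness and transparency—the reader sees why the full-module case is an equivalence while the $I(+)F$ case is not; what the paper's citation buys is brevity and uniformity with its policy of obtaining $2$-absorbing results as $S=\{1_R\}$ specializations. One small correction: your claim that ``$I$ nonzero is inherited'' is false when $M\neq 0$, since then $0(+)M$ is a nonzero ideal while $I=0$; under the paper's stated definition (which requires a $2$-absorbing ideal to be nonzero and proper) this corner case deserves a word, though under the more common convention requiring only properness it is vacuous, and properness itself transfers both ways since $(1,0)\in I(+)M$ if and only if $1\in I$.
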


\begin{proof}
    Follow by [\cite{UTK}, Proposition 6], if we take $S=\{ 1_{R}\}$, where $S$ is a m.c.s. of $R$.
\end{proof}

\begin{proposition} \label{Prop.Triv.}

Let $R$ be a ring and $I$ be an ideal of $R$. Suppose that $F$ is a proper submodule of $M$ such that $IM\subset F$. If $I(+)F$ is an $2-$absorbing ideal of $R(+)M$, then $I$ is an $2-$absorbing ideal of $R$.

\end{proposition}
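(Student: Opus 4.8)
The plan is to deduce the $2$-absorbing condition for $I$ by pulling it back from $R(+)M$ along the canonical embedding $\iota\colon R\to R(+)M$ given by $r\mapsto (r,0)$. This $\iota$ is a unital ring homomorphism: it is additive, it sends $1$ to the identity $(1,0)$, and from $(r_1,0)\cdot(r_2,0)=(r_1r_2,0)$ it is multiplicative. Since $0\in F$, the contraction of $I(+)F$ along $\iota$ is $\iota^{-1}(I(+)F)=\{r\in R:(r,0)\in I(+)F\}=I$. In this way the statement becomes an instance of the general fact that the contraction of a $2$-absorbing ideal along a ring homomorphism again satisfies the $2$-absorbing identity; I would nonetheless carry out the short computation explicitly, in keeping with the elementwise style used in the propositions above.

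Concretely, I would take $a,b,c\in R$ with $abc\in I$ and compute in $R(+)M$ that $(a,0)\cdot(b,0)\cdot(c,0)=(abc,0)$, which lies in $I(+)F$ because $abc\in I$ and $0\in F$. Applying the hypothesis that $I(+)F$ is a $2$-absorbing ideal of $R(+)M$ to these three factors, at least one of the pairwise products $(a,0)(b,0)=(ab,0)$, $(a,0)(c,0)=(ac,0)$, $(b,0)(c,0)=(bc,0)$ belongs to $I(+)F$. Comparing first coordinates, this reads $ab\in I$ or $ac\in I$ or $bc\in I$, which is exactly the defining $2$-absorbing property of $I$.

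Finally I would check the side conditions demanded by the definition. Properness is forced by the hypothesis $IM\subset F$: were $I=R$ we would get $IM=M\subseteq F$, contradicting that $F$ is a proper submodule, so $I\neq R$. The only genuinely delicate point---and where I expect the care to lie, rather than in the computation---is the requirement that a $2$-absorbing ideal be nonzero: this does not follow from the absorbing identity itself and must be tracked from the standing hypotheses (or assumed separately), whereas the absorbing identity transfers to $I$ verbatim through the embedding $r\mapsto (r,0)$. Note also that this argument uses only $IM\subset F$ through the fact that it guarantees $I(+)F$ is an ideal and forces $I$ proper, which is why the converse inclusion cannot be expected without further hypotheses on $F$.
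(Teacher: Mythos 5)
Your proof is correct and takes essentially the same route as the paper's own: both test the $2$-absorbing hypothesis on the elements $(a,0),(b,0),(c,0)$ of $R(+)M$, note $(a,0)(b,0)(c,0)=(abc,0)\in I(+)F$, and read off $ab\in I$ or $ac\in I$ or $bc\in I$ from the first coordinates. Your added remarks on properness of $I$ and on the ``nonzero'' clause in Badawi's definition (which indeed does not transfer, since $I(+)F$ can be nonzero while $I=0$) are careful points the paper leaves implicit.
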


\begin{proof}
Let $abc \in I$ for $a,b,c\in R$. For $I(+)F$ be an $2-$absorbing ideal of $R(+)M$, we have $(a,0)(b,0)(c,0)\in I(+)F$,  where $(a,0),(b,0),(c,0)\in R(+)M$. In this case, either $(a,0)(b,0)=(ab,0)\in I(+)F$ or $(a,0)(c,0)=(ac,0)\in I(+)F$ or $(b,0)(c,0)=(bc,0)\in I(+)F$. It implies that either $ab\in I$ or $ac\in I$ or $bc\in I$, as desired.
\end{proof}

\section{2-absorbing submodules of Amalgamation modules} \label{sectoin2}

To avoid repetition, let us notation for the rest of the paper. Throughout the whole paper, $R_1$ and $R_2$ be two rings, $J$ be an ideal of $R_2$ and $f:R_1\rightarrow R_2$ be a ring homomorphism. $M$ and $N$ are $R_1-$module and $R_2-$module, respectively.  $\varphi: M\rightarrow N$ is a module homomorphism. The amalgamation of $M$ and $N$ along $J$ with respect to $\varphi$ is defined by $M \bowtie^{\varphi} JN=\{(m,\varphi(m)+n):m\in M, n\in JN\}$, where $M$ be $R-$module, $N$ be $R_2-$module (and so $R_1-$module) and  $\varphi: M\rightarrow N$ is $R_1-$module homomorphism.\\
Let $F$ be a  $2-$absorbing submodule of $M$, the following set $$F \bowtie^{\varphi} JN=\{(m,\varphi(m)+n):m\in F, n\in JN\},$$ 
 a $R_1 \bowtie^f J-$ module by the following scalar product
$$(r,f(r)+j)(m,\varphi(m)+n):=(rm,\varphi(rm)+f(r)n+j\varphi(m)+jn)$$
and a submodule of $M\bowtie^\varphi JN$. Note that $\varphi(rm)=f(r)\varphi(m)$, since $\varphi$ is an $R_1-$module homomorphism (\cite{KMSS} and \cite{KC1}).\\
Also the $\overline{N_2}^{\varphi}$ is defined as 
$$\overline{N_2}^{\varphi}:=\{(m,\varphi(m)+n)\in M \bowtie^{\varphi} JN:\;\varphi(m)+n\in N_2 \}$$ which is a a submodule of $M\bowtie^\varphi JN$.\\

We will detect when $F \bowtie^{\varphi} JN$ is a  $2-$absorbing submodule of $M\bowtie^\varphi JN$ and $\overline{N_2}^{\varphi}$ is a $2-$absorbing submodule of $M \bowtie^{\varphi} JN$. We should give some lemmas for this purpose.\\
The following lemma can obtain by [\cite{UTK}, Proposition 4], if we take $S=\{1_{R_1}\}$, where $S$ be multiplicatively closed subset of $R_1$. 

\begin{lemma}
Let $M$ be a $R_1-$ module, $N$ be a $R_2-$ module and $\varphi_1:M\rightarrow N$ be a module homomorphism. Then we have the followings:\\
1) Assume that $\varphi_1$ is surjective and $F$ is an submodule of $M$ containing $Ker(\varphi_1)$. If $F$ is $2-$absorbing submodule of $M$, then $\varphi_1(F)$ is $2-$absorbing submodule of $N$.\\
2) If $N_2$ is an  $2-$absorbing submodule of $N$, then ${\varphi_1}^{-1}(N_2)$ is an  $2-$absorbing submodule of $M$.
\label{Lemma3}
\end{lemma}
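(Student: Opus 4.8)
The plan is to verify the defining trichotomy of a $2$-absorbing submodule directly in each case, transporting a relation $abm\in(\cdot)$ through $\varphi_1$ — backward along the preimage in part (2) and forward along the surjection in part (1) — and then converting each of the three resulting outcomes. Throughout I regard $N$ as an $R_1$-module via $f$, so that the $R_1$-linearity $\varphi_1(rx)=r\,\varphi_1(x)$ is available; the $R_2$-versions of the hypotheses follow by restriction of scalars.

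For part (2) I would first note that $\varphi_1^{-1}(N_2)$ is a submodule of $M$, and that it is proper as soon as $\varphi_1(M)\nsubseteq N_2$ (automatic when $\varphi_1$ is onto, since $N_2\neq N$). Then I take $a,b\in R_1$ and $m\in M$ with $abm\in\varphi_1^{-1}(N_2)$; by linearity this says $\varphi_1(abm)=ab\,\varphi_1(m)\in N_2$. Since $N_2$ is $2$-absorbing in $N$, one of $ab\in(N_2:_{R_1} N)$, $a\varphi_1(m)\in N_2$, $b\varphi_1(m)\in N_2$ must hold. The latter two read $\varphi_1(am)\in N_2$ and $\varphi_1(bm)\in N_2$, i.e. $am\in\varphi_1^{-1}(N_2)$ or $bm\in\varphi_1^{-1}(N_2)$; in the first outcome $abN\subseteq N_2$ forces $\varphi_1(abM)=ab\,\varphi_1(M)\subseteq abN\subseteq N_2$, hence $abM\subseteq\varphi_1^{-1}(N_2)$, which is precisely $ab\in(\varphi_1^{-1}(N_2):_{R_1} M)$. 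This is the required trichotomy.

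For part (1) the hinge is the identity $\varphi_1^{-1}(\varphi_1(F))=F+Ker(\varphi_1)=F$, where the hypothesis $Ker(\varphi_1)\subseteq F$ collapses the sum; this identity also yields properness of $\varphi_1(F)$, since $\varphi_1(F)=N$ would give $F=\varphi_1^{-1}(N)=M$ by surjectivity, contradicting $F\neq M$. Now for $a,b\in R_1$ and $n\in N$ with $abn\in\varphi_1(F)$, surjectivity lets me write $n=\varphi_1(m)$, so $\varphi_1(abm)=abn\in\varphi_1(F)$ and therefore $abm\in\varphi_1^{-1}(\varphi_1(F))=F$. Applying that $F$ is $2$-absorbing and pushing its three outcomes forward through $\varphi_1$ — using $N=\varphi_1(M)$ to turn $abM\subseteq F$ into $abN=\varphi_1(abM)\subseteq\varphi_1(F)$, i.e. $ab\in(\varphi_1(F):_{R_1} N)$ — gives the claim.

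The submodule checks and the scalar bookkeeping coming from the $f$-induced action on $N$ are routine; the only steps demanding care are the colon-ideal clauses, where I must translate $ab\in(F:_{R_1} M)$ into a statement about all of $N$ (respectively all of $M$) rather than a single element, and in part (1) the kernel hypothesis, which is exactly what keeps $\varphi_1^{-1}(\varphi_1(F))$ from being strictly larger than $F$. Since the lemma is advertised as the $S=\{1_{R_1}\}$ case of [\cite{UTK}, Proposition 4], one may alternatively just invoke that specialization, but the direct verification above is short enough to present in full.
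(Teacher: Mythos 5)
Your verification is correct: in part (2) the pull-back of the trichotomy through $R_1$-linearity, together with the conversion $abN\subseteq N_2\Rightarrow abM\subseteq\varphi_1^{-1}(N_2)$, is exactly what is needed, and in part (1) the identity $\varphi_1^{-1}(\varphi_1(F))=F+\mathrm{Ker}(\varphi_1)=F$ is indeed the hinge that both transports $abn\in\varphi_1(F)$ back to $abm\in F$ and yields properness of $\varphi_1(F)$. The paper, however, does not prove this lemma at all: it simply declares it to be the special case $S=\{1_{R_1}\}$ of [UTK, Proposition~4] on $S$-$2$-absorbing submodules, so your direct argument is a genuinely different (and self-contained) route, essentially reproving that specialization from scratch. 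What your approach buys is twofold. First, it makes visible a hypothesis the paper's statement silently omits: in part (2), $\varphi_1^{-1}(N_2)$ is proper only when $\varphi_1(M)\nsubseteq N_2$ (e.g.\ for $\varphi_1=0$ the preimage is all of $M$), a condition you correctly identify and which is harmless in the paper's application since the map $p_{\gamma}$ used in Theorem~3.1 is surjective. Second, your opening remark about restriction of scalars addresses the two-ring bookkeeping ($M$ over $R_1$, $N$ over $R_2$ via $f$) that the paper's one-line citation glosses over; note only that your part (1) establishes the $2$-absorbing property of $\varphi_1(F)$ for the $R_1$-action induced by $f$, which is the version actually used later (where the relevant ring map $R_1\bowtie^f J\rightarrow f(R_1)+J$ is surjective, making the two actions' trichotomies coincide), whereas for a non-surjective $f$ the $R_2$-statement would not follow. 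What the paper's citation buys is brevity and a pointer to the more general $S$-version; your proof is short enough that either choice is defensible.
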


The following lemmas can obtain by [\cite{UTK}, Corollary 4], if we take $S=\{1_{R_1}\}$.

\begin{lemma} \label{lemma1}
Let $R_1$ be a ring, $M$ be an $R_1-$module, $F$ and $K$ be submodules of $M$ with $K\subseteq F.$ Then $F$ is $2-$absorbing submodule of $M$ if and only if $F/K$ is a $2-$absorbing submodule of $M/K$. 
\label{lemma11}
\end{lemma}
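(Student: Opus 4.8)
The plan is to prove the biconditional of Lemma \ref{lemma1} directly from the definition of $2$-absorbing submodule, exploiting the canonical correspondence between elements and residuals in $M$ and their images in the quotient $M/K$. The key fact I would lean on repeatedly is the identity $((F/K):_R (M/K)) = (F:_R M)$, which holds because $K \subseteq F$: an element $ab$ annihilates $M/K$ into $F/K$ precisely when $abM \subseteq F$. Establishing this small residual identity first keeps both directions clean, since the ``$ab \in (F:_R M)$'' clause in the definition translates verbatim between the two settings.

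For the forward direction I would start by observing that $F/K$ is proper in $M/K$ whenever $F$ is proper in $M$ (as $K \subseteq F \subsetneq M$). Then I take $a,b \in R$ and $\bar{m} = m+K \in M/K$ with $ab\bar{m} \in F/K$, which means $abm \in F$. Applying that $F$ is $2$-absorbing in $M$ gives $ab \in (F:_R M)$ or $am \in F$ or $bm \in F$; passing to the quotient yields $ab \in ((F/K):_R(M/K))$ or $a\bar m \in F/K$ or $b\bar m \in F/K$ by the residual identity, which is exactly what is required. For the converse I would reverse this: given $abm \in F$ with $a,b \in R$, $m \in M$, I push to $M/K$ via $ab\bar m = \overline{abm} \in F/K$, invoke that $F/K$ is $2$-absorbing, and pull each of the three resulting conclusions back to $M$ using the same identity together with the fact that $a\bar m \in F/K \iff am \in F$ (which again uses $K \subseteq F$, so that $am + K \in F/K$ forces $am \in F$).

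I do not expect a genuine obstacle here; the lemma is essentially a bookkeeping statement about how the $2$-absorbing condition behaves under the natural projection $M \to M/K$. The only point requiring a little care is the equivalence $am \in F \iff am + K \in F/K$, which is where the hypothesis $K \subseteq F$ is genuinely used (without it, $am+K \in F/K$ would only give $am \in F+K$). If one prefers to avoid even this routine check, the cleanest route is to cite the stated source directly: the lemma is precisely [\cite{UTK}, Corollary 4] specialized to $S = \{1_{R_1}\}$, as the excerpt already announces, so a one-line proof simply records that specialization. I would nonetheless include the short direct argument above so the paper is self-contained, since the translation is transparent once the residual identity is in hand.
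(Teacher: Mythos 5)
Your argument is correct, but it is not the route the paper takes: the paper gives no direct proof at all --- the sentence preceding the lemma (``The following lemmas can obtain by [\cite{UTK}, Corollary 4], if we take $S=\{1_{R_1}\}$'') is its entire justification. Your proposal in effect supplies the verification the paper delegates to that reference. The mechanics check out: the residual identity $((F/K):_R (M/K)) = (F:_R M)$ and the equivalence $am \in F \iff am+K \in F/K$ both hold precisely because $K \subseteq F$ (a coset computation gives $rm+K \in F/K \Rightarrow rm \in F+K = F$), and since every element of $M/K$ is a coset $m+K$, the quantifiers and the three-way disjunction in the definition of $2$-absorbing transfer verbatim across the projection $M \to M/K$, in both directions. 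What your version buys is self-containedness and an explicit record of exactly where $K \subseteq F$ is needed (without it one only obtains $am \in F+K$); what the paper's citation buys is brevity and uniformity, since the adjacent Lemma on $(0)$ being $2$-absorbing in $M/F$ is extracted from the same corollary of \cite{UTK}. One small point to tidy in your write-up: you verify properness only in the forward direction, but the converse also needs $F \subsetneq M$, which follows just as routinely from $F/K \subsetneq M/K$ via the correspondence theorem; and your closing observation that one could simply cite [\cite{UTK}, Corollary 4] at $S=\{1_{R_1}\}$ is, as it happens, exactly what the paper does.
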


\begin{lemma} \label{lemma12}
Let $R_1$ be a ring, $M$ be an $R_1-$module, $F$ be submodules of $M$. Then $F$ is a $2-$absorbing submodule of $M$ if and only if $(0)$ is $2-$absorbing submodule of $M/F$ .
\end{lemma}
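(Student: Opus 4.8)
The plan is to derive Lemma \ref{lemma12} directly from the already-established Lemma \ref{lemma11} by making the specialization $K = F$. This is the natural strategy because Lemma \ref{lemma12} is literally the $K=F$ instance of Lemma \ref{lemma11} once we identify the zero submodule $F/F$ of the quotient $M/F$ with $(0)$.

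First I would recall the statement of Lemma \ref{lemma11}: for submodules $K \subseteq F$ of $M$, we have that $F$ is a $2$-absorbing submodule of $M$ if and only if $F/K$ is a $2$-absorbing submodule of $M/K$. Since $F \subseteq F$ trivially, I may legitimately take $K = F$ in this lemma. Substituting $K = F$ yields that $F$ is a $2$-absorbing submodule of $M$ if and only if $F/F$ is a $2$-absorbing submodule of $M/F$. The only remaining step is the observation that the image of $F$ in the quotient $M/F$ is precisely the zero submodule, i.e. $F/F = (0)$, so that the condition ``$F/F$ is $2$-absorbing in $M/F$'' is identical to ``$(0)$ is $2$-absorbing in $M/F$.'' Combining these gives exactly the claimed equivalence.

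I expect there to be essentially no obstacle here, since the argument is a one-line specialization. The only point warranting a sentence of care is the identification $F/F = (0)$ in $M/F$: under the canonical projection $\pi: M \to M/F$, every element of $F$ maps to the zero coset, so the submodule $F/F$ of $M/F$ consists solely of the zero element, which is the submodule $(0)$. After noting this, the biconditional follows immediately from Lemma \ref{lemma11}, and the proof is complete.
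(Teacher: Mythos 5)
Your proposal is correct: since $F\subseteq F$, taking $K=F$ in Lemma \ref{lemma11} gives that $F$ is $2$-absorbing in $M$ if and only if $F/F$ is $2$-absorbing in $M/F$, and the identification $F/F=(0)$ under the canonical projection is immediate, so Lemma \ref{lemma12} follows. The paper itself, however, does not argue this way on the page: it gives no standalone proof of either lemma, instead attributing both Lemma \ref{lemma11} and Lemma \ref{lemma12} directly to [\cite{UTK}, Corollary 4] specialized at $S=\{1_{R_1}\}$. So your route is genuinely different in form, if not in depth: you make the paper self-contained at this point by reducing Lemma \ref{lemma12} internally to the already-recorded Lemma \ref{lemma11}, whereas the paper leans twice on the external reference. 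What your approach buys is independence from \cite{UTK} for the second lemma; what it costs is nothing, since the specialization is logically airtight. One point you could make explicit, though it is automatic: a $2$-absorbing submodule is by definition proper, and the biconditional respects this degenerate case, since if $F=M$ then $F$ is not proper in $M$ and simultaneously $(0)=M/F$ is not proper in $M/F$, so both sides of the equivalence fail together and the statement remains consistent.
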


Now, we give the our main theorem which is the core of this paper.

\begin{theorem}
Under the above notations, the following statements hold.\\
1) $F \bowtie^{\varphi} JN$ is $2-$absorbing submodule $M \bowtie^{\varphi} JN$ if and only if $F$ is $2-$absorbing submodule $M.$\\
2) $\overline{N_2}^{\varphi}$ is an $2-$absorbing submodule of $ M \bowtie^{\varphi} JN$ if and only if $N_2$ is an $2-$absorbing submodule of $\varphi(M)+JN.$

\label{thm13}
\end{theorem}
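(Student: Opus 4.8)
The plan is to reduce both parts of Theorem~\ref{thm13} to the already-established Lemmas~\ref{lemma1}, \ref{lemma12}, and \ref{Lemma3}, together with the two structural facts from \cite{KMSS} that are advertised in the introduction: the quotient isomorphism $\frac{M \bowtie^{\varphi} JN}{F \bowtie^{\varphi} JN} \cong \frac{M}{F}$ and the surjective module homomorphism $p_{\gamma}: M \bowtie^{\varphi} JN \rightarrow \varphi(M)+JN$ defined by $p_\gamma(m,\varphi(m)+n) = \varphi(m)+n$. The strategy is purely structural: rather than verifying the $2$-absorbing condition element-by-element on triples $(a,b)\cdot(m,\varphi(m)+n)$ inside the amalgamation, I transport the problem through these maps so that it becomes a statement about $M$ itself or about $\varphi(M)+JN$.

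For part~(1), I would first invoke the isomorphism $\theta:\frac{M \bowtie^{\varphi} JN}{F \bowtie^{\varphi} JN} \xrightarrow{\ \sim\ } \frac{M}{F}$ of $R_1\bowtie^f J$-modules coming from \cite{KMSS}. By Lemma~\ref{lemma12}, $F \bowtie^{\varphi} JN$ is a $2$-absorbing submodule of $M \bowtie^{\varphi} JN$ precisely when $(0)$ is a $2$-absorbing submodule of the quotient $\frac{M \bowtie^{\varphi} JN}{F \bowtie^{\varphi} JN}$, and likewise $F$ is a $2$-absorbing submodule of $M$ exactly when $(0)$ is a $2$-absorbing submodule of $\frac{M}{F}$. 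Since $\theta$ carries $(0)$ to $(0)$, it suffices to check that being $2$-absorbing is preserved under this isomorphism. The subtlety I expect here is a scalar-ring mismatch: $\frac{M \bowtie^{\varphi} JN}{F \bowtie^{\varphi} JN}$ is naturally an $R_1\bowtie^f J$-module while $\frac{M}{F}$ is an $R_1$-module, so I must check that the $2$-absorbing condition computed with scalars from $R_1\bowtie^f J$ matches the one computed with scalars from $R_1$. The point is that a pair $(r,f(r)+j)\in R_1\bowtie^f J$ acts on $\frac{M}{F}$ through its first coordinate $r$, so the image of $(F\bowtie^\varphi JN):_{R_1\bowtie^f J}(M\bowtie^\varphi JN)$ corresponds to $(F:_{R_1}M)$ under the identification, and the three alternatives in the definition translate faithfully. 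I would make this compatibility explicit, as it is the genuine content of the argument.

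For part~(2), I would use the homomorphism $p_\gamma$ from \cite{KMSS}, which is surjective onto $\varphi(M)+JN$ with kernel $\{(m,0):\varphi(m)=0,\ m\in M\}$, i.e.\ $\mathrm{Ker}(p_\gamma)\cong \mathrm{Ker}(\varphi)$ embedded in the first coordinate. By construction $\overline{N_2}^{\varphi} = p_\gamma^{-1}(N_2)$, so one direction is immediate from Lemma~\ref{Lemma3}(2): if $N_2$ is $2$-absorbing in $\varphi(M)+JN$, then $p_\gamma^{-1}(N_2)=\overline{N_2}^{\varphi}$ is $2$-absorbing in $M \bowtie^{\varphi} JN$. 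For the converse I would apply Lemma~\ref{Lemma3}(1): since $p_\gamma$ is surjective and $\overline{N_2}^{\varphi}=p_\gamma^{-1}(N_2)$ contains $\mathrm{Ker}(p_\gamma)$, if $\overline{N_2}^{\varphi}$ is $2$-absorbing then $p_\gamma(\overline{N_2}^{\varphi})=N_2$ is $2$-absorbing in $\varphi(M)+JN$. Again the same scalar-ring bookkeeping arises, since $p_\gamma$ is a map of $R_1\bowtie^f J$-modules landing in $\varphi(M)+JN$ viewed as an $f(R_1)+J$-module, and I would note that the action factors compatibly so that Lemma~\ref{Lemma3} applies verbatim.

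The main obstacle I anticipate is therefore not the existence of the isomorphism or the homomorphism, which are quoted from \cite{KMSS}, but the careful verification that the $2$-absorbing property and, in particular, the residual $(\,\cdot\,:M)$ that appears in its definition are transported correctly across the change of scalar ring from $R_1\bowtie^f J$ to $R_1$ (respectively to $f(R_1)+J$). Once that compatibility is pinned down, both equivalences follow formally by combining it with Lemmas~\ref{lemma1}, \ref{lemma12}, and \ref{Lemma3}; the element-chasing that a direct proof would require is entirely absorbed into these quoted structural lemmas.
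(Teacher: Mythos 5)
Your proposal is correct and follows essentially the same route as the paper's proof: part (1) reduces to Lemma \ref{lemma12} through the isomorphism $\frac{M \bowtie^{\varphi} JN}{F \bowtie^{\varphi} JN} \cong \frac{M}{F}$ from [KMSS, Proposition 2.2(2)], and part (2) combines the surjection $p_{\gamma}$ with Lemma \ref{Lemma3} via the identities $p_{\gamma}(\overline{N_2}^{\varphi})=N_2$ and $p_{\gamma}^{-1}(N_2)=\overline{N_2}^{\varphi}$, exactly as the paper does (your explicit attention to the change of scalar rings is a compatibility the paper leaves tacit). The only slip is your description of the kernel: $\mathrm{Ker}(p_{\gamma})=\varphi^{-1}(JN)\times\{0\}$, not $\mathrm{Ker}(\varphi)\times\{0\}$, since $p_{\gamma}(m,\varphi(m)+n)=0$ forces $n=-\varphi(m)\in JN$, i.e.\ only $\varphi(m)\in JN$; this is harmless, as your argument uses only the inclusion $\mathrm{Ker}(p_{\gamma})\subseteq p_{\gamma}^{-1}(N_2)=\overline{N_2}^{\varphi}$.
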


\begin{proof}
We have $\frac{M \bowtie^{\varphi} JN}{F \bowtie^{\varphi} JN} \cong \frac{M}{F}$ by [ \cite{KMSS},  Proposition 2.2.(2)]. Consider $\mu: \frac{M \bowtie^{\varphi} JN}{F \bowtie^{\varphi} JN} \rightarrow \frac{M}{F}$ the given module isomorphism.  \\
1) By assertion  of Lemma \ref{lemma12}, $F$ is an $2-$absorbing submodule of $M$ if and only if $(0)$ is $2-$absorbing submodule of $\frac{M}{F}$ and by isomorphism $\frac{M \bowtie^{\varphi} JN}{F \bowtie^{\varphi} JN} \cong \frac{M}{F}$ by [ \cite{KMSS},  Proposition 2.2.(2)] if and only if $(0)$ is $2-$absorbing submodule of $\frac{M \bowtie^{\varphi} JN}{F \bowtie^{\varphi} JN}$ and again assertion of Lemma \ref{lemma12} $F \bowtie^{\varphi} JN$ is $2-$absorbing submodule $M \bowtie^{\varphi} JN$, as desired.\\
2) $\Rightarrow:$ Let $N_2$ be a submodule of $\varphi(M)+JN$. Let's define the natural projection map

\begin{equation}
  p_{\gamma}: M \bowtie^{\varphi} JN \rightarrow \varphi(M)+JN \label{homorphism}  
\end{equation}
for $(m, \varphi(m)+n)\longmapsto \varphi(m)+n$.\\
Clearly, $p_{\gamma}$ is surjective and $Ker(p_{\gamma})=\varphi^{-1}(JN)\times \{0\}$ (see the proof of [\cite{KMSS}, Proposition 2.2 (3)]) and we have $Ker(p_{\gamma})=\varphi^{-1}(JN)\times \{0\}\subseteq \overline{N_2}^{\varphi}$ by definition.\\
Also, we get $p_{\gamma}(\overline{N_2}^{\varphi})=N_2$. Because, since $p_\gamma$ is surjective and $N_2$ is a submodule of $\varphi(M)+JN$, for all $n\in N_2$, there exists $m'\in M,\; n'\in N$ such that $n=\varphi(m')+n'$. Then $(m',\varphi(m')+n')\in \overline{N_2}^{\varphi}$. Hence $n=\varphi(m')+n'=p_\gamma((m',\varphi(m')+n'))\in p_\gamma (\overline{N_2}^{\varphi})$. Namely, we have $N_2\subseteq p_{\gamma}(\overline{N_2}^{\varphi})$. On the other hand, take $\varphi(m)+n\in p_{\gamma}(\overline{N_2}^{\varphi})$ such that $(m,\varphi(m)+n)\in \overline{N_2}^{\varphi}$ and so $\varphi(m)+n\in N_2$. That is, we have $p_{\gamma}(\overline{N_2}^{\varphi})\subseteq N_2$. Thus, if  $\overline{N_2}^{\varphi}$ is an $2-$absorbing submodule of $ M \bowtie^{\varphi} JN$, then  $N_2$ is an $2-$absorbing submodule of $\varphi(M)+JN,$ by Lemma \ref{Lemma3} (1)\\
$\Leftarrow$: Note that $p_{\gamma}^{-1}(N_2)=\overline{N_2}^{\varphi}$. Because, take $(m,\varphi(m)+n)\in p_{\gamma}^{-1}(N_2)$, then $p_\gamma((m,\varphi(m)+n))=\varphi(m)+n\in N_2$ and so $(m,\varphi(m)+n)\in \overline{N_2}^{\varphi}$. Namely, we have $p_{\gamma}^{-1}(N_2)\subseteq \overline{N_2}^{\varphi}$. For the converse, take $(m,\varphi(m)+n)\in \overline{N_2}^{\varphi}$. It implies that $p_\gamma((m,\varphi(m)+n))=\varphi(m)+n\in N_2$ and so $(m,\varphi(m)+n)\in p_{\gamma}^{-1}(N_2)$, then we get $\overline{N_2}^{\varphi}\subseteq p_{\gamma}^{-1}(N_2)$. Hence, by Lemma \ref{Lemma3} (2), as desired.
\end{proof}

\begin{corollary} \label{duplication}
Under the above notations, we get the followings\\
1) $F \bowtie J$ is $2-$absorbing submodule $M \bowtie J$ if and only if $F$ is $2-$absorbing submodule $M.$\\
2) $\overline{N_2}$ is an $2-$absorbing submodule of $ M \bowtie J$ if and only if $N_2$ is an $2-$absorbing submodule of $M.$
\end{corollary}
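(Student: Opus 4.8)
The plan is to obtain Corollary~\ref{duplication} as a direct specialization of the main Theorem~\ref{thm13}. As noted in the introduction, the duplication module $M\bowtie J$ is exactly the amalgamation $M\bowtie^{\varphi}JN$ in the special case $R=R_1=R_2$, $M=N$, $f=\mathrm{Id}_R$, and $\varphi=\mathrm{Id}_M$. First I would record these substitutions explicitly and check that each of the objects appearing in Theorem~\ref{thm13} collapses to the corresponding object for the duplication. In particular, with $\varphi=\mathrm{Id}_M$ we have $\varphi(M)+JN=M+JM=M$, so the ambient module $\varphi(M)+JN$ over which $N_2$ is taken in part~(2) of the theorem becomes simply $M$; likewise $F\bowtie^{\varphi}JN$ reduces to $F\bowtie J$ and $\overline{N_2}^{\varphi}$ reduces to $\overline{N_2}$, matching the notation for submodules of $M\bowtie J$ introduced in Section~\ref{sec:introduction}.

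Once these identifications are in place, the proof is essentially a matter of reading off the two equivalences. For part~(1), Theorem~\ref{thm13}(1) says $F\bowtie^{\varphi}JN$ is a $2$-absorbing submodule of $M\bowtie^{\varphi}JN$ if and only if $F$ is a $2$-absorbing submodule of $M$; after substituting the duplication data this is exactly the statement that $F\bowtie J$ is $2$-absorbing in $M\bowtie J$ if and only if $F$ is $2$-absorbing in $M$. For part~(2), Theorem~\ref{thm13}(2) gives the equivalence between $\overline{N_2}^{\varphi}$ being $2$-absorbing in $M\bowtie^{\varphi}JN$ and $N_2$ being $2$-absorbing in $\varphi(M)+JN$; since the latter ambient module is now $M$, this becomes the assertion that $\overline{N_2}$ is $2$-absorbing in $M\bowtie J$ if and only if $N_2$ is $2$-absorbing in $M$.

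The only point requiring genuine care, and what I expect to be the main (if mild) obstacle, is verifying that the scalar multiplication and the submodule definitions really do coincide under the specialization, so that Theorem~\ref{thm13} is literally applicable rather than merely analogous. Concretely I would confirm that the $(R_1\bowtie^f J)$-module structure on $M\bowtie^{\varphi}JN$, given by $(r,f(r)+j)(m,\varphi(m)+n)=(rm,\varphi(rm)+f(r)n+j\varphi(m)+jn)$, reduces under $f=\mathrm{Id}_R$ and $\varphi=\mathrm{Id}_M$ to the $(R\bowtie J)$-action $(r,r+j)(m,m')=(rm,(r+j)m')$ recorded earlier for the duplication, and that the defining condition $m-m'\in JM$ for elements of $M\bowtie J$ is consistent with writing a general element in the form $(m,m+n)$ with $n\in JM$. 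This compatibility is already asserted in the introduction (where it is observed that one may write $F\bowtie^{\varphi}JN=F\bowtie J$ in this case), so the verification is routine.

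\begin{proof}
Take $R=R_1=R_2$, $M=N$, $f=\mathrm{Id}_R$ and $\varphi=\mathrm{Id}_M$. As observed in Section~\ref{sec:introduction}, under these choices the amalgamation $M\bowtie^{\varphi}JN$ coincides with the duplication $M\bowtie J$, the submodule $F\bowtie^{\varphi}JN$ coincides with $F\bowtie J$, and $\overline{N_2}^{\varphi}$ coincides with $\overline{N_2}$. Moreover $\varphi(M)+JN=M+JM=M$. Substituting these identifications into Theorem~\ref{thm13}(1) yields that $F\bowtie J$ is a $2$-absorbing submodule of $M\bowtie J$ if and only if $F$ is a $2$-absorbing submodule of $M$, which is assertion~(1). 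Substituting into Theorem~\ref{thm13}(2), and using $\varphi(M)+JN=M$, yields that $\overline{N_2}$ is a $2$-absorbing submodule of $M\bowtie J$ if and only if $N_2$ is a $2$-absorbing submodule of $M$, which is assertion~(2).
\end{proof}
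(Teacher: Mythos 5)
Your proof is correct and is essentially identical to the paper's own argument, which simply specializes Theorem~\ref{thm13} by taking $\varphi=\mathrm{Id}_M$ (with $R=R_1=R_2$, $M=N$, $f=\mathrm{Id}_R$). Your version is just a more careful write-up of the same one-line specialization, including the useful explicit check that $\varphi(M)+JN=M+JM=M$, which the paper leaves implicit.
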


\begin{proof}
    Follows by Theorem \ref{thm13} if we take $\varphi=Id_M$.
\end{proof}

\begin{example}
Let $R_1=\mathbb{Z}$, $R_2=\mathbb{Q}[[X]],\; M=\mathbb{Z},\; N=\mathbb{Q}[[X]], \; f:\mathbb{Z}\rightarrow \mathbb{Q}[[X]]$ be the natural embedding, $J=X \mathbb{Q}[[X]]$ is the ideal of $\mathbb{Q}[[X]]$ and $\varphi:\mathbb{Z}\rightarrow \mathbb{Q}[[X]] $ is a module homomorphism. Let $L=\{X P(X): P\in f(A) +J=\mathbb{Z} + X \mathbb{Q}[[X]]\}$. Obviously, $L$ is an ideal of $f(A) +J=\mathbb{Z} + X \mathbb{Q}[[X]]$.
Consider $\mathbb{Z} \bowtie^f \mathbb{Q}[[X]]-$module $\mathbb{Z} \bowtie^\varphi \mathbb{Q}[[X]]$.
 In this case $0 \bowtie^{\varphi} L$ is a submodule of $\mathbb{Z} \bowtie^{\varphi} \mathbb{Q}[[X]]$, but
$0 \bowtie^{\varphi} L$ is not  $2-$ absorbing submodule of $\mathbb{Z} \bowtie^{\varphi} \mathbb{Q}[[X]].$ Indeed, for
$0 \bowtie^{\varphi} L=\{(0,X P(X)) : P(X)\in \mathbb{Z} + X \mathbb{Q}[[X]]\}$, take $(5,5)\in \mathbb{Z} \bowtie^{\varphi} \mathbb{Q}[[X]]$ and $(0,\frac{X}{5^2})\in  0 \bowtie^{\varphi} H$, we have  $(5,5)^2\cdot (0,\frac{X}{5^2})=(0,X) \in 0 \bowtie^\varphi L $ but $(5,5)(0,\frac{X}{5^2}) \notin 0 \bowtie^\varphi L$ and $(5,5)^2 \notin (0 \bowtie^{\varphi} L:_{\mathbb{Z} \bowtie^{\varphi} \mathbb{Q}[[X]]} \mathbb{Z} \bowtie^{\varphi} \mathbb{Q}[[X]]).$ Because take $(1,X)\in \mathbb{Z} \bowtie^{\varphi} \mathbb{Q}[[X]]$, then $(5,5)^2 (1,X)\notin (0 \bowtie^{\varphi} L)$, as desired.
\end{example}

\begin{example}
Let $f:\Z \rightarrow \Z$ be a ring homomorphism. $J=k\Z$ is an ideal of $\Z.\;\;M=\Z/30\Z,\; and \;\;N=\Z/30\Z$ be $\mathbb{Z}-$module. $ \varphi:\Z/30\Z \rightarrow \Z/30\Z$ be a module homomorphism. Take $F=0$ is a submodule of $M.$ Then, $ 0 \bowtie^{\varphi} JN$ is a submodule of $ M \bowtie^{\varphi} JN$, but it is not $2-$absorbing submodule of $M \bowtie^{\varphi} JN$ . Because by [\cite{DS}, Example 2.2], $F$ is not $2-$absorbing submodule of $M$, then we get $ 0 \bowtie^{\varphi} JN$ is not $2-$absorbing submodule of $ M \bowtie^{\varphi} JN$ by Theorem \ref{thm13} (1).
\end{example}

\begin{example}
    By [\cite{UTK}, Example 3], especially consider $\mathbb{Z}-$module $M=\mathbb{Z}\times \mathbb{Z}_6$, then the zero submodule $F$ is not $2-$absorbing submodule, since $6\cdot (0,\Bar{1})=(0,\Bar{0})\in F$ but $6\notin (F:_\mathbb{Z} M)=0$ and $2\cdot (0,\Bar{1})=(0,\Bar{2})\notin F$ and  $3\cdot (0,\Bar{1})=(0,\Bar{3})\notin F$. Because of this, we have $0\bowtie^\varphi JN$ is not $2-$absorbing submodule of $M \bowtie^{\varphi} JN$ by Theorem \ref{thm13} (1), where $N$ be a $\mathbb{Z}-$module and $J$ be an ideal of $\mathbb{Z}$ and $\varphi: M\rightarrow N$ is a module homomorphism.
\end{example}


\begin{corollary} \label{Cor.0}
 Under the notations of the begining of the Section \ref{sectoin2}, we have the followings.\\
1) If $F$ is the intersection of each pair of distinct prime submodules of $M$, then $F \bowtie^{\varphi} JN$ is $2-$absorbing submodule of $M \bowtie^{\varphi} JN$.\\
2) Let $M$ be a cyclic $R-$module and $F$ be a submodule of $M$. If $(F:_{R} M)$ is 2-absorbing ideal of $R,$ then $F \bowtie^{\varphi} JN$ is $2-$absorbing submodule of $M \bowtie^{\varphi} JN$.\\
3) Let $P$ is a prime ideal of $R$ and $F$ be a $P-$primary submodule of a cyclic $R-$module such that $(pM)^2 \subseteq F.$ Then $F \bowtie^{\varphi} JN$ is $2-$absorbing submodule of $M \bowtie^{\varphi} JN$.\\
\end{corollary}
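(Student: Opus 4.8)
All three statements are applications of Theorem~\ref{thm13}(1), which reduces the question to whether $F$ itself is a $2$-absorbing submodule of $M$: once $F$ is known to be $2$-absorbing in $M$, Theorem~\ref{thm13}(1) immediately yields that $F \bowtie^{\varphi} JN$ is $2$-absorbing in $M \bowtie^{\varphi} JN$. So in each part the plan is simply to verify that the hypothesis forces $F$ to be $2$-absorbing. For part (1), I would quote the result of \cite{DS} recalled in the Introduction, that the intersection of any distinct pair of prime submodules of $M$ is $2$-absorbing; this is exactly the hypothesis, so $F$ is $2$-absorbing and Theorem~\ref{thm13}(1) applies. For part (2), since $M$ is cyclic I would invoke the characterization from \cite{DS}: for a cyclic module, $F$ is $2$-absorbing if and only if $(F:_R M)$ is a $2$-absorbing ideal of $R$. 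The hypothesis is precisely the right-hand side, so again $F$ is $2$-absorbing and we are done.

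Part (3) is where there is genuine work, and the plan is to show that the $P$-primary hypothesis together with $(PM)^2 \subseteq F$ forces $(F:_R M)$ to be a $2$-absorbing ideal, after which the route of part (2) closes the argument. Writing $I = (F:_R M)$ and using cyclicity to write $M = Rx$ with $\mathrm{ann}(x) \subseteq I$, the $P$-primary hypothesis says $I$ is a $P$-primary ideal, i.e.\ $\sqrt{I} = P$, while $(PM)^2 \subseteq F$ translates into the ideal containment $P^2 \subseteq I$. Granting these, I would run the standard argument of \cite{B}: if $abc \in I$ with $ac \notin I$ and $bc \notin I$, then applying primariness to $a(bc) \in I$ gives $a \in \sqrt{I} = P$, applying it to $b(ac) \in I$ gives $b \in \sqrt{I} = P$, and hence $ab \in P^2 \subseteq I$. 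Thus $I$ is $2$-absorbing, $F$ is $2$-absorbing by the cyclic characterization of \cite{DS}, and Theorem~\ref{thm13}(1) gives the conclusion.

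The main obstacle I anticipate is the bookkeeping in part (3) that turns the submodule statement $(PM)^2 \subseteq F$ into the ideal statement $P^2 \subseteq (F:_R M)$, and that identifies ``$F$ is $P$-primary'' with ``$(F:_R M)$ is a $P$-primary ideal''. Both hinge on the correspondence between submodules of a cyclic module $M \cong R/\mathrm{ann}(x)$ and ideals of $R$ containing $\mathrm{ann}(x)$, and the meaning of the square $(PM)^2$ must be pinned down through that correspondence. Once this dictionary is in place the remaining verifications are routine, and the rest of the corollary is a direct appeal to the cited characterizations and to Theorem~\ref{thm13}(1).
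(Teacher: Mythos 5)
Your proposal is correct and follows the same route as the paper: reduce each part to the statement that $F$ is a $2$-absorbing submodule of $M$, then apply Theorem~\ref{thm13}(1). The paper's entire proof is a one-line citation of [\cite{DS}, Theorem 2.3(1)], [\cite{DS}, Proposition 2.9(1)] and [\cite{DS}, Theorem 2.11] for parts (1)--(3) respectively; your parts (1) and (2) invoke exactly the first two of these. The only difference is in part (3), where the paper quotes [\cite{DS}, Theorem 2.11] wholesale while you reconstruct its content: with $I=(F:_R M)$, the $P$-primary hypothesis on $F$ makes $I$ a $P$-primary ideal, the containment $(PM)^2\subseteq F$ becomes $P^2\subseteq I$, and the Badawi-style case analysis (if $abc\in I$ with $ac\notin I$ and $bc\notin I$, primariness applied to $a(bc)$ and $b(ac)$ forces $a,b\in\sqrt{I}=P$, so $ab\in P^2\subseteq I$) shows $I$ is $2$-absorbing, whence $F$ is $2$-absorbing by the cyclic characterization of \cite{DS} and Theorem~\ref{thm13}(1) finishes. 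This unpacking is sound; worth noting that $(F:_R M)$ being a primary ideal follows for arbitrary modules (not just cyclic ones), so cyclicity is needed only to read $(PM)^2$ as $P^2M$ through the submodule--ideal correspondence and to transfer the $2$-absorbing property from the ideal $(F:_R M)$ back to the submodule $F$. What your version buys is a self-contained proof of the [\cite{DS}] ingredient; what the paper's version buys is brevity.
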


\begin{proof}
    The proof $1-3$ of Corollary \ref{Cor.0} can be proved using [\cite{DS}, Theorem 2.3.(1)], [\cite{DS}, Proposition 2.9.(1)], [\cite{DS}, Theorem 2.11 )], respectively with our main Theorem \ref{thm13}. 
\end{proof}

\begin{corollary} \label{Cor1}
Under the notations of the begining of the Section \ref{sectoin2}, we have the followings.\\
1) Let $F$ be 2-absorbing submodule of $M$ and $K_1$ is a submodule of $M$ such that $K_1 \nsubseteq F$. Then $(F:_{R} K_1) \bowtie^{\varphi} JN$ is $2-$absorbing submodule of $M \bowtie^{\varphi} JN$.\\
2) Let $F$ be 2-absorbing submodule of $M$ such that $M\nsubseteq F$, then $(F:_{R} M) \bowtie^{\varphi} JN$ is $2-$absorbing submodule of $M \bowtie^{\varphi} JN$.\\
3) Let $F$ be 2-absorbing submodule of $M$ and $r\in R\backslash  (F:_{R} M)$, then $(F:_{M} r) \bowtie^{\varphi} JN$ is $2-$absorbing submodule of $M \bowtie^{\varphi} JN$.\\
4)   Let $\{F_i\}_{i\in \Delta}$ be a chain of $2-$absorbing submodules of $M$. If $F= \bigcap_{i\in \Delta}F_i$ is a $2-$absorbing submodules of $M,$ then $F \bowtie^{\varphi} JN$ is $2-$absorbing of submodule $M \bowtie^{\varphi} JN$.\\
5) Let $\{F_i\}_{i\in \Delta}$ be a chain of $2-$absorbing submodules of $M$ and $M$ is a finitely generated $R-$module. If $F=\bigcup_{i\in \Delta}F_i$ is a $2-$absorbing submodules of $M,$ then $F \bowtie^{\varphi} JN$ is $2-$absorbing submodule of $M \bowtie^{\varphi} JN$.\\
6) Let $S$ be a multiplicatively closed subset of $R$ and $S^{-1}M$ be the module of fraction of an $R-$module $M$ and $\Bar{\varphi}: S^{-1}M\rightarrow S^{-1}N$ is $R-$module homomorphism. If $F$ is a  $2-$absorbing submodule of $M$ such that $(F:_{R} M) \cap S= \varnothing$, then $S^{-1}F \bowtie^{\varphi} JN$ is $2-$absorbing submodule of $ S^{-1}M \bowtie^{\varphi} JN$.\\
7) Let $S$ be a multiplicatively closed subset of $R$ and $S^{-1}M$ be the module of fraction of an $R-$module $M$ and $\Bar{\varphi}: S^{-1}M\rightarrow S^{-1}N$ is $R-$module homomorphism. If $S^{-1}F$ is a  $2-$absorbing submodule of $S^{-1}M$ such that $Z(M/F)\cap S=\varnothing$, then $F \bowtie^{\varphi} JN$ is $2-$absorbing submodule of $M \bowtie^{\varphi} JN$.\\
\label{Cor.1}
 \end{corollary}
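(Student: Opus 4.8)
The plan is to treat the corollary as a systematic application of the main Theorem \ref{thm13}(1), which reduces the assertion that a submodule $G \bowtie^{\varphi} JN$ is $2$-absorbing in $M \bowtie^{\varphi} JN$ to the purely module-theoretic statement that $G$ is a $2$-absorbing submodule of $M$. Since each of the seven parts identifies a particular submodule $G$ of a (possibly localized) module, and each such $G$ has already been shown to be $2$-absorbing in Section \ref{section3}, the proof amounts to pairing the correct earlier result with Theorem \ref{thm13}(1).

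For parts (1)--(3), I would first cite the relevant proposition to obtain a $2$-absorbing submodule of $M$, then apply Theorem \ref{thm13}(1) directly. Specifically: part (1) uses Proposition \ref{Prop.1}(1) (so that $(F:_{R} K_1)$ is $2$-absorbing, with the hypothesis $K_1 \nsubseteq F$); part (2) uses Proposition \ref{Prop.1}(2) (so that $(F:_{R} M)$ is $2$-absorbing); and part (3) uses Proposition \ref{Prop.2} (so that $(F:_{M} r)$ is $2$-absorbing for $r \notin (F:_{R} M)$). Parts (4) and (5) are even more immediate: the hypothesis itself already assumes $F = \bigcap_{i \in \Delta} F_i$, respectively $F = \bigcup_{i \in \Delta} F_i$, is $2$-absorbing---and Proposition \ref{Prop.3} in fact guarantees this for any chain (with $M$ finitely generated in the union case)---so here only Theorem \ref{thm13}(1) is invoked.

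The two parts demanding genuine care are (6) and (7), where the ambient object is the localization. For part (6) I would apply Proposition \ref{Prop.4}(1), using $(F:_{R} M) \cap S = \varnothing$, to conclude that $S^{-1}F$ is a $2$-absorbing submodule of $S^{-1}M$; for part (7) I would apply Proposition \ref{Prop.4}(2), using $Z(M/F) \cap S = \varnothing$, to conclude that $F$ is $2$-absorbing in $M$. In each case Theorem \ref{thm13}(1) then finishes the argument: for (6) applied with $S^{-1}M$, $\Bar{\varphi}$, and $S^{-1}F$ in place of $M$, $\varphi$, and $F$, and for (7) applied to the original datum.

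The hard part will be confirming that Theorem \ref{thm13}(1) transports verbatim to the localized setting in part (6): one must verify that $S^{-1}F \bowtie^{\Bar{\varphi}} JN$ is genuinely the amalgamation submodule attached to $S^{-1}F$ and $\Bar{\varphi}: S^{-1}M \to S^{-1}N$ (the theorem is stated with $\varphi$, but the localized homomorphism $\Bar{\varphi}$ is the one in force here), and that the isomorphism $(S^{-1}M \bowtie^{\Bar{\varphi}} JN)/(S^{-1}F \bowtie^{\Bar{\varphi}} JN) \cong S^{-1}M / S^{-1}F$ underlying [\cite{KMSS}, Proposition 2.2.(2)] survives localization. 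Once this compatibility of the amalgamation construction with $S^{-1}(-)$ is in place, every clause reduces to the mechanical substitution described above.
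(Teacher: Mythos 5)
Your proposal matches the paper's proof exactly: the paper proves each of the seven parts by pairing Proposition \ref{Prop.1}(1), Proposition \ref{Prop.1}(2), Proposition \ref{Prop.2}, Proposition \ref{Prop.3}(1), Proposition \ref{Prop.3}(2), Proposition \ref{Prop.4}(1) and Proposition \ref{Prop.4}(2), respectively, with Theorem \ref{thm13}, which is precisely your reduction. Your closing remark about checking that Theorem \ref{thm13} applies verbatim in the localized setting of part (6) with $\Bar{\varphi}$ in place of $\varphi$ is a point of care the paper silently elides, but it does not change the route---the theorem is stated for an arbitrary module homomorphism, so it applies to $\Bar{\varphi}:S^{-1}M\rightarrow S^{-1}N$ directly.
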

\begin{proof}
    
    Each statement can be proved using Proposition \ref{Prop.1} (1) , Proposition \ref{Prop.1} (2), Proposition \ref{Prop.2} (1), Proposition \ref{Prop.3} (1), Proposition \ref{Prop.3} (2), Proposition \ref{Prop.4} (1) and Proposition \ref{Prop.4} (2), respectively with our main Theorem \ref{thm13}. 
\end{proof}


Let $R$ be a ring and $S$ be an multiplicatively closed set of $R$. From by \cite{UTK} for $S=\{1_R\}$ and our main Theorem \ref{thm13}, we obtain the following statements.

\begin{corollary} \label{cor2}
Under the notations of the begining of the Section \ref{sectoin2}, we have the followings.\\
(1) Let $M$ be a finitely generated multiplication $R-$module and $F$ be a submodule of $M$. If $(F:_R M)$ is an $2-$absorbing ideal of $R$, then $F\bowtie^\varphi JN$ is a $2-$absorbing submodule of $M\bowtie^\varphi JN$. \\
(2) Let $M$ be a finitely generated multiplication $R-$module and $F$ be a submodule of $M$. If $F=IM$ for some $2-$absorbing ideal $I$ of $R$ such that $ann(M)\subseteq I$, then $F\bowtie^\varphi JN$ is a $2-$absorbing submodule of $M\bowtie^\varphi JN$.\\
(3) Let $M_i$ be an $R_i-$module for each $i=1,2$ and $M=M_1\times M_2,\; R=R_1\times R_2$. Suppose that $F_1$ is a submodule of $M_1$ and $F_2$ is a submodule of $M_2$ such that $F=F_1\times F_2$. If $F_1$ is a $2-$absorbing submodule of $M_1$ or $F_2$ is a $2-$absorbing submodule of $M_2$ or $F_1$ is a prime submodule of $M_1$ or $F_2$ is a prime submodule of $M_2$, then $F\bowtie^\varphi JN$ is a $2-$absorbing submodule of $M\bowtie^\varphi JN$.\\
(4) Let $n\geq 1$ and $M_i$ be an $R_i-$module. Let $M=M_1\times M_2 \times \cdots \times M_n $ and $ R=R_1\times R_2 \times \cdots \times R_n$. Suppose that $F_i$ is a submodule of $M_i$ for each $i\in \{1,\cdots , n\}$ and $F=F_1\times F_2\times \cdots \times F_n$. If $F_k$ is an $2-$absorbing submodule of $M_k$ for some $k\in \{1,\cdots , n\}$ or $F_{k_1}$ is an prime submodule of $M_{k_1}$ and $F_{k_2}$ is an prime submodule of $M_{k_2}$ for some $k_1,k_2\in \{1,\cdots , n\}$ and $k_1\neq k_2$, then $F\bowtie^\varphi JN$ is a $2-$absorbing submodule of $M\bowtie^\varphi JN$.
\end{corollary}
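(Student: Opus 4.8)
The plan is uniform across the four items: in each case I would first verify, purely over the base ring, that $F$ is a $2$-absorbing submodule of $M$, and then transfer the conclusion to the amalgamation by a single appeal to Theorem \ref{thm13}(1). Indeed, that theorem asserts the equivalence that $F\bowtie^\varphi JN$ is $2$-absorbing in $M\bowtie^\varphi JN$ if and only if $F$ is $2$-absorbing in $M$, so the amalgamation plays no further role once the module-theoretic hypothesis has been checked. I would record this reduction at the outset, so that every part becomes a statement about $F$ inside $M$ alone.

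For (1), when $M$ is a finitely generated multiplication $R$-module, the $S=\{1_R\}$ specialization of the $S$-$2$-absorbing characterization of \cite{UTK} gives that $F$ is $2$-absorbing precisely when $(F:_R M)$ is a $2$-absorbing ideal of $R$; the hypothesis then makes $F$ a $2$-absorbing submodule of $M$, and Theorem \ref{thm13}(1) concludes. For (2), I would reduce to (1): for a multiplication module one has $(IM:_R M)=I+ann(M)$, and the assumption $ann(M)\subseteq I$ collapses this to $(F:_R M)=(IM:_R M)=I$, which is $2$-absorbing by hypothesis; hence $F$ is $2$-absorbing in $M$ and Theorem \ref{thm13}(1) applies. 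Equivalently, one may quote the $S=\{1_R\}$ form of the corresponding statement of \cite{UTK} directly.

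For (3) and (4) I would appeal to the $S=\{1_R\}$ form of the direct-product results of \cite{UTK}, which, under the stated hypotheses on the factors $F_i$, yield that $F=F_1\times\cdots\times F_n$ is a $2$-absorbing submodule of $M=M_1\times\cdots\times M_n$ over $R=R_1\times\cdots\times R_n$. With $F$ thus shown to be $2$-absorbing in $M$, a final application of Theorem \ref{thm13}(1) delivers the assertion for $F\bowtie^\varphi JN$.

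The step requiring the most attention is bookkeeping rather than a genuine obstacle: in (3) and (4) the symbols $R_1,R_2,\dots$ and $M_i$ are reused as the factors of a product, so I must make explicit that it is the product ring $R=R_1\times\cdots\times R_n$ and the product module $M=M_1\times\cdots\times M_n$ that occupy the roles of the base ring and base module of the amalgamation $M\bowtie^\varphi JN$, and confirm that $F$ is a bona fide submodule of this $M$ so that Theorem \ref{thm13}(1) is genuinely applicable. Once that identification is fixed, no computation beyond the cited characterizations of \cite{UTK} is needed.
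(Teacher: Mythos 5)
Your proposal is correct and follows essentially the same route as the paper: reduce each part to showing $F$ is $2$-absorbing in $M$ via the $S=\{1_R\}$ specializations of the results of \cite{UTK} (the characterization through $(F:_R M)$ for finitely generated multiplication modules, and the direct-product results for (3) and (4)), then conclude by one application of Theorem \ref{thm13}(1). Your explicit reduction of (2) to (1) via $(IM:_R M)=I+\mathrm{ann}(M)$ is a small elaboration the paper leaves to the cited theorem, but it is sound and changes nothing essential.
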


\begin{proof}
    Each statements can be seen using [\cite{UTK}, Theorem 2], [\cite{UTK}, Theorem 2], [\cite{UTK}, Proposition 7], [\cite{UTK}, Theorem 3], respectively if we take $S=\{1_R\}$, where $S$ is a m.c.s. of $R$.
\end{proof}

Let $R$ be an integral domain and $M$ be an $R-$module. If for all nonzero elements $m$ and $n$ of $M$, either $Rm\subseteq Rn$ or $Rn\subseteq Rm$, then $M$ is called a valuation module. Equivalently, for any submodules $N$ and $K$ of $M$, either $N\subseteq K$ or $K\subseteq N$ \cite{Ali}.

\begin{corollary}\label{Cor.4}
Let $R$ be an integral domain and $M$ a finitely generated faithfull multiplication $R-$ module. Also if $M$ is a valuation module, then under the notations of the begining of the Section \ref{sectoin2}, we have the followings.\\
    (1) If $F$ is a $p-$primary submodule of $M$ for some prime ideal $p$ of $R$ with $p^2M\subseteq N$, then $F\bowtie^\varphi JN$ is a $2-$absorbing submodule of $M\bowtie^\varphi JN$.\\
    (2) If $F=P$ or $P^2$ for some prime submodule $P(=M-rad\;F)$ of $M$, then $F\bowtie^\varphi JN$ is a $2-$absorbing submodule of $M\bowtie^\varphi JN$.
\end{corollary}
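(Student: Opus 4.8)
The plan is to reduce both assertions to Theorem \ref{thm13}(1), exactly as in the proofs of Corollaries \ref{Cor.0}, \ref{Cor.1} and \ref{cor2}. Since Theorem \ref{thm13}(1) asserts that $F \bowtie^{\varphi} JN$ is $2$-absorbing in $M \bowtie^{\varphi} JN$ if and only if $F$ is $2$-absorbing in $M$, it suffices to prove that, under the hypotheses of each part, $F$ is a $2$-absorbing submodule of $M$. All the amalgamation content is then absorbed into a single appeal to Theorem \ref{thm13}(1) at the end of each part, so the genuine work is purely module-theoretic.

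I would carry out that work using the structure of a finitely generated faithful multiplication module $M$ over the integral domain $R$. For such a module the assignment $G \mapsto (G :_R M)$ is a bijection between submodules of $M$ and ideals of $R$, with inverse $I \mapsto IM$; it carries prime submodules to prime ideals (faithfulness gives $ann_R(M)=0$, so no ideal is excluded), and it is compatible with products and radicals, so that $M\text{-}rad\,G$ corresponds to $\sqrt{(G :_R M)}$ and $p^2M \subseteq F$ is equivalent to $p^2 \subseteq (F :_R M)$. (I read the hypothesis $p^2M \subseteq N$ in part (1) as $p^2M \subseteq F$.) Moreover the valuation hypothesis forces the submodules of $M$ to be totally ordered, so $M\text{-}rad\,F$ is a single prime submodule $P=(P :_R M)M$, which is what legitimises the notation $P(=M\text{-}rad\,F)$ in part (2).

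For part (1), $F$ being $p$-primary translates to $(F :_R M)$ being a $p$-primary ideal with $p^2 \subseteq (F :_R M)$; a $P$-primary ideal containing the square of its radical is $2$-absorbing (the ideal-theoretic fact behind \cite{B}, whose module analogue for cyclic modules is [\cite{DS}, Theorem 2.11]), so $(F :_R M)$ is $2$-absorbing and hence, by the finitely generated faithful multiplication hypothesis, $F$ is $2$-absorbing in $M$. For part (2), if $F=P$ is prime then $F$ is automatically $2$-absorbing, since every prime submodule is $2$-absorbing; if $F=P^2$, I write $P=QM$ with $Q=(P :_R M)$ prime, so that $P^2=Q^2M$ and $(P^2 :_R M)=Q^2$. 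Here the valuation hypothesis is essential: in a valuation setting $Q^2$ is again $Q$-primary, and since it contains $(\sqrt{Q^2})^2$ it is $2$-absorbing, whence $F=P^2$ is $2$-absorbing. In every case Theorem \ref{thm13}(1) then concludes. I expect the real obstacle to be the passage between the submodule and ideal pictures, namely that $(F :_R M)$ being $2$-absorbing forces $F$ to be $2$-absorbing (the clean equivalence of \cite{DS} is stated only for cyclic modules, so I would invoke the finitely generated faithful multiplication version already used in Corollary \ref{cor2}), together with the verification that the valuation hypothesis genuinely makes $P^2$ primary, which is precisely where the totally ordered lattice of submodules is needed.
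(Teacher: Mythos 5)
Your proposal is correct, and its skeleton coincides with the paper's: both parts are reduced, via Theorem \ref{thm13}(1), to the purely module-theoretic claim that $F$ itself is a $2$-absorbing submodule of $M$. The difference is in how that claim is settled: the paper disposes of it in one line by citing [\cite{DS2}, Theorem 8(2)] and [\cite{DS2}, Theorem 8(3)], whereas you reconstruct it from scratch through the ideal--submodule dictionary for finitely generated faithful multiplication modules ($G=(G:_R M)M$, $(IM:_R M)=I$, $M-rad\,G=\sqrt{(G:_R M)}M$), the Badawi-type fact that a $Q$-primary ideal containing $Q^2$ is $2$-absorbing (\cite{B}, with module analogue [\cite{DS}, Theorem 2.11]), and the bridge ``$(F:_R M)$ $2$-absorbing ideal $\Rightarrow$ $F$ $2$-absorbing submodule'' already used in Corollary \ref{cor2} via [\cite{UTK}, Theorem 2]. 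Your route buys two things the citation hides: it shows that part (1) never actually uses the valuation hypothesis, and it isolates exactly where part (2) does use it, namely to guarantee that $Q^2$ is $Q$-primary for $Q=(P:_R M)$. That last assertion is the only step you leave unverified, and it is true and easy to complete: since $(IM:_R M)=I$ for every ideal $I$, comparability of the submodules $IM$ and $JM$ forces comparability of $I$ and $J$, so $R$ is a valuation domain; there $a\notin Q$ gives $Q=aQ$, hence $Q^2=aQ^2$, and cancellation in the domain yields $b\in Q^2$ whenever $ab\in Q^2$. With that filled in, your argument is a complete, self-contained proof of the corollary at the cost of re-proving the cited result of \cite{DS2}; your reading of the typo $p^2M\subseteq N$ as $p^2M\subseteq F$ is also the correct interpretation of the source statement.
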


\begin{proof}
    $(1)$ and $(2)$ can be proved by using [\cite{DS2}, Theorem 8 (2)] and [\cite{DS2}, Theorem 8 (3)], respectively with our main theorem \ref{thm13}.
\end{proof}


\section{A Conflict of Interest Statement}
The corresponding author states that there is no conflict of interest.

 \end{document}